\date{}
\newtheorem{theorem}{Theorem}[section]
\newtheorem{lemma}[theorem]{Lemma}
\newtheorem{corollary}[theorem]{Corollary}
\newtheorem{proposition}[theorem]{Proposition}
\theoremstyle{remark}
\newtheorem{example}[theorem]{Example}
\newtheorem{remark}[theorem]{Remark}
\theoremstyle{definition}
\newtheorem{problem}[theorem]{Problem}
\newcommand{\dR}{\ensuremath{\mathbb{R}}} 
\newcommand{\R}{\dR}
\begin{document}

\title{Mass transportation and contractions }

\author{Alexander V. Kolesnikov}

\begin{abstract}
According to a celebrated result of L. Caffarelli, every optimal mass transportation mapping 
pushing forward the standard Gaussian measure onto 
a  log-concave measure $e^{-W} dx$ with $D^2 W \ge \mbox{Id}$  is  $1$-Lipschitz.
We present  a short survey of related results and various applications.
\end{abstract}

\maketitle

Keywords: {optimal transportation,  Monge--Amp{\`e}re  equation, log-concave measures, Gaussian measures, 
isoperimetric inequalities, Sobolev inequalities}

\section{Introduction}

Given a positive number $\alpha$ we say that a mapping $T : \R^d \to \R^d$ is  $\alpha$-Lipschitz if 
$$
|T(x)-T(y)| \le \alpha |x-y|.
$$
For a smooth $T$ this is equivalent to the following: 
$$\sup_{x \in \R^d}\| DT (x)\| \le \alpha,$$ where $\| \cdot \|$ is the operator norm.
For the case $\alpha=1$ we say that $T$ is a contraction.

Similarly, a mapping  $T : X \to Y$  between metric spaces  is called  contraction, if
$\rho_Y(T(x_1), T(x_2)) \le \rho_X(x_1, x_2)$.

Let $\mu$ be a Borel measure on  a metric space  $(M, \rho)$.
Given a Borel set $A \subset M$  we  define the corresponding boundary measure $\mu^+$ of   $\partial A$ 
$$
\mu^{+}(\partial A) = \underline{\lim}_{h \to 0} \frac{\mu(A_h) - \mu(A)}{h},
$$
where $A_h = \{x: \rho(x,A) \le h\}$.

A set $A$ is called isoperimetric if it has the minimal surface measure among of all the sets 
with the same  measure $\mu(A)$. The isoperimetric profile
$\mathcal{I}_{\mu}$ of $\mu$  is defined as the following function
$$
\mathcal{I}_{\mu}(t) = \inf \{ \mu^+(\partial A): \ \mu(A)=t\}.
$$

Generally, isoperimetric sets are not possible to find.
Nevertheless, bounds for isoperimetric functions (the so-called isoperimetric inequalities)  have many
applications in analysis, geometry and probability theory.
It is well-known, for instance, that isoperimetric inequalities imply Sobolev-type inequalities. 
See more in  
\cite{Gromov}, \cite{MilSch}, \cite{Ledoux}, \cite{Ros},  \cite{Vill}.

 Numerous applications of contractions in analysis, probability and geometry rely on the following fact:

{\it 
Let $X$, $Y$ be two metric spaces and $X$ is equipped with a measure $\mu$.
Assume that there exists a contraction $T : X \to Y$  between metric spaces $X$ and $Y$. 
Then the image measure $\nu = \mu \circ  T^{-1}$ has a better isoperimetric profile
}
$$
\mathcal{I}_{\nu} \ge \mathcal{I}_{\mu}.
$$

In this paper we study mainly a special case of optimal transportations of measures.
Given two Borel probability measures $\mu$ and $\nu$
we consider the optimal transportation map 
$T$ 
minimizing the cost $$W^2_2(\mu,\nu) = \int |x - T(x)|^2 \ d \mu$$ among of all the  maps 
pushing forward $\mu$ to $\nu$. The latter means that $\mu \circ T^{-1}(A) = \nu(A)$
for every Borel $A$.

If $\mu = \rho_0 \ dx$ and $\nu = \rho_1 \ dx$ are absolutely continuous, then
$T$ does exist and can be obtained from the solution to the corresponding Monge-Kantorovich transportation problem. Moreover,
this map is $\mu$-unique  and has the form $T =
\nabla \Phi$, where $\Phi$ is convex (see \cite{Vill}). Assuming smoothness of   $\Phi$, one can easily verify that
$\Phi$  solves the
following nonlinear PDE (the  Monge--Amp{\`e}re  equation):
$$
{\rho_1(\nabla \Phi)} \det D^2 \Phi = {\rho_0}.
$$

This paper contains an overview of the results related to the contractivity of optimal transportation mappings.
The first result in this direction has been established by L.~Caffarelli (see \cite{Caf}).
Let $\mu$ be the standard Gaussian measure 
$\mu = \frac{1}{(2\pi)^{d/2}} e^{-\frac{x^2}{2}} \ dx$ and $\nu = e^{-W} \ dx$ 
with $D^2 W \ge \mbox{Id}$, then the corresponding $T$ is a contraction.
This observation implies immediately the Bakry-Ledoux comparison theorem and various 
functional inequalities, including the log-Sobolev inequality for uniformly log-concave measures.
Among of other applications let us mention the Gaussian correlation conjecture and the Brascamp-Lieb
inequality. We  discuss several extensions of this result and some open problems.

\section{Caffarelli's contraction theorem}

\begin{remark}
The Theorem \ref{contr1} and Theorem \ref{contr2}  below will be both referred to as "Caffarelli's contraction theorem".
Note, however, that  the original formulation is given in Theorem \ref{contr2}.
\end{remark}

\begin{theorem}
\label{contr1}
{\bf (L. Caffarelli) }
Let $T=\nabla \Phi$ be the optimal transportation mapping pushing forward a  probability measure $\mu = e^{-V} dx$
onto a probability measure $\nu = e^{-W} dx$. Assume that $V$ and $W$ are twice continuously differentiable and $D^2 W \ge K$. Then for every unit vector $e$
$$
\sup_{x \in \R^d} \Phi^2_{ee}  \le \frac{1}{K} \sup_{x \in \R^d} V_{ee}.
$$
In particular, if $\mu$ is the standard Gaussian measure and $K \ge 1$, then $T$ is a contraction.
\end{theorem}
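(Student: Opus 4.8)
The plan is to exploit the Monge--Amp\`ere equation satisfied by $\Phi$ together with the maximum principle applied to the second directional derivative $\Phi_{ee}$. Since $T=\nabla\Phi$ pushes $\mu=e^{-V}dx$ forward onto $\nu=e^{-W}dx$, the change-of-variables formula gives $e^{-W(\nabla\Phi)}\det D^2\Phi=e^{-V}$, and taking logarithms yields the pointwise identity
$$
V = W(\nabla\Phi) - \log\det D^2\Phi.
$$
The idea is to differentiate this identity twice along a fixed unit vector $e$ and to read the outcome as an elliptic inequality for $u:=\Phi_{ee}$, which is nonnegative by convexity of $\Phi$.

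Differentiating once in direction $e$ and using $\partial_e\log\det D^2\Phi=\Phi^{ij}\Phi_{ije}$, where $(\Phi^{ij})$ is the inverse Hessian, and then differentiating again with $\partial_e\Phi^{ij}=-\Phi^{ik}\Phi^{jl}\Phi_{kle}$, I would arrive at
$$
\Phi^{ij}u_{ij} - W_k(\nabla\Phi)\,u_k = \sum_{k,l}W_{kl}(\nabla\Phi)\Phi_{ke}\Phi_{le} + \mathrm{tr}\bigl(AMAM\bigr) - V_{ee},
$$
where $A=(\Phi^{ij})$ and $M=(\Phi_{ije})$. The left-hand side is exactly $Lu$ for the linearized operator $L=\Phi^{ij}\partial_i\partial_j-W_k(\nabla\Phi)\partial_k$, which is degenerate-elliptic because $D^2\Phi$ is positive definite.

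Two structural facts make the right-hand side tractable when seeking an upper bound. First, $\mathrm{tr}(AMAM)=\mathrm{tr}\bigl((A^{1/2}MA^{1/2})^2\bigr)\ge 0$, so this term may simply be dropped. Second, the hypothesis $D^2W\ge K$ gives $\sum_{k,l}W_{kl}\Phi_{ke}\Phi_{le}\ge K\sum_k\Phi_{ke}^2=K\,|(D^2\Phi)e|^2\ge K\,\Phi_{ee}^2$, the last step by Cauchy--Schwarz since $e$ is a unit vector. I would then evaluate at a point $x_0$ at which $u=\Phi_{ee}$ attains its maximum: there $\nabla u=0$ and $D^2u\le 0$, whence $Lu(x_0)\le 0$. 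Combining these observations,
$$
0\ge Lu(x_0)\ge K\,\Phi_{ee}^2(x_0) - V_{ee}(x_0)\ge K\bigl(\sup_x\Phi_{ee}\bigr)^2 - \sup_x V_{ee},
$$
which rearranges to $\sup_x\Phi_{ee}^2\le\tfrac1K\sup_x V_{ee}$ since $\Phi_{ee}\ge 0$. For the standard Gaussian one has $V=\tfrac12|x|^2+\mathrm{const}$, so $V_{ee}\equiv 1$; with $K\ge 1$ this gives $\Phi_{ee}\le 1$ for every unit $e$, i.e. $\|D^2\Phi\|\le 1$, so $T$ is a contraction.

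The main obstacle is not the two-line computation but its justification. One must first know that $\Phi$ is regular enough ($C^3$, say) for the repeated differentiation to make sense, which is already more than the stated $C^2$ data provides and requires Caffarelli's interior regularity theory for the Monge--Amp\`ere equation. More seriously, the domain is all of $\R^d$ and nothing is compact, so the existence of an interior maximum point $x_0$ of $\Phi_{ee}$ is not automatic and must be secured by genuine a priori control of $D^2\Phi$ at infinity. The honest route is an approximation scheme on bounded convex domains, using a barrier or penalization to localize the maximum and running the maximum principle there, followed by passage to the limit. It is this handling of the non-compactness and regularity, rather than the maximum-principle calculation, where the real work lies.
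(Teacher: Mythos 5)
Your argument is precisely the paper's first (maximum-principle) proof: the same twice-differentiated Monge--Amp\`ere identity, the same discarding of the nonnegative term $\mathrm{tr}\bigl[((D^2\Phi)^{-1}D^2\Phi_e)^2\bigr]$, and the same evaluation at a maximum point of $\Phi_{ee}$ using $D^2W\ge K$ and Cauchy--Schwarz. The regularity and non-compactness issues you rightly flag are exactly why the paper calls this proof formal and supplements it with the incremental-quotients argument, which secures the existence of the maximum point via the decay $\delta_2\Phi(x)\to 0$ at infinity after approximating by measures with bounded convex support and Gaussian-dominated potential.
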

{\bf Sketch of the proof:}

{\bf 1) Maximum principle proof.}

The proof based on the maximum principle is formal but elegant. Functons $V, W$ and $\Phi$ are assumed to be sufficiently regular.
Note that smoothness of $\Phi$ can be justified  in some favorable situations ($V,W$ are smooth and satisfy certain growth assumptions, see Theorem 4.14 of \cite{Vill}).
By the change of variables formula
$$
e^{-V} = e^{-W(\nabla \Phi)} \det D^2 \Phi.
$$
Taking the logarithm of both sides we get
$$
V = W(\nabla \Phi) - \log \det D^2 \Phi.
$$
We fix some unit vector $e$ and differentiate this formula twice along $e$.
To this end we apply the following fundamental relation
$$
\partial_{e} \ln \det D^2 \Phi = \frac{\partial_{e} \det D^2 \Phi}{\det D^2 \Phi} = \mbox{\rm Tr} (D^2 \Phi)^{-1} D^2 \Phi_{e}.
$$
Differentiating this formula along another direction $v$   and using that
$$
D^2 \Phi_{v} (D^2\Phi)^{-1} + D^2 \Phi  \bigl[ (D^2\Phi)^{-1} \bigr]_{v}=0
$$
we obtain
$$
\partial_{e v} \ln \det D^2 \Phi = \mbox{\rm Tr} (D^2 \Phi)^{-1} D^2 \Phi_{e v}
-
\mbox{Tr} \Bigl[ (D^2 \Phi)^{-1} D^2 \Phi_{e} (D^2 \Phi)^{-1} D^2 \Phi_{v} \Bigr].
$$
Coming back to the change of variables formula we get
$$
V_e = \langle \nabla W(\nabla \Phi),  D^2 \Phi \cdot e \rangle - \mbox{\rm Tr} (D^2 \Phi)^{-1} D^2 \Phi_{e}
$$
and
\begin{align*}
V_{ee}
 & =
\langle D^2 W(\nabla \Phi)  D^2 \Phi \cdot e,  D^2 \Phi \cdot e \rangle
+ 
\langle \nabla W(\nabla \Phi),  \nabla \Phi_{ee} \rangle
\\&
-
\mbox{\rm Tr} (D^2 \Phi)^{-1} D^2 \Phi_{ee}
+
\mbox{Tr} \Bigl[ (D^2 \Phi)^{-1} D^2 \Phi_{e}\Bigr]^2.
\end{align*}

Now assume that   $\Phi_{ee}$ attains its maximum at $x_0$. Then 
$$
\nabla \Phi_{ee}(x_0)=0, \ D^2 \Phi_{ee} \le 0.
$$
Note that $\mbox{Tr} \bigl[ (D^2 \Phi)^{-1} D^2 \Phi_{e}\bigr]^2 >0 $ because it equals to
 $\mbox{Tr} C^2 $, where $$C =(D^2 \Phi)^{-1/2} D^2 \Phi_{e}  (D^2 \Phi)^{-1/2} $$ is a symmetric matrix.

Clearly, $\mbox{\rm Tr} (D^2 \Phi(x_0))^{-1} D^2 \Phi_{ee}(x_0) \le 0$ and one gets
$$
V_{ee}(x_0) \ge K \| D^2 \Phi(x_0) \cdot e\| \ge K \Phi^2_{ee}(x_0).
$$
Hence
$$
\sup_{x \in \R^d} \Phi^2_{ee} \le \frac{1}{K} \sup_{x \in \R^d} V_{ee}(x_0).
$$

{\bf 2) Incremental quotients proof}

Instead of differentiating the Monge-Amp{\`e}re equation we consider 
the incremental quotient
$$
\delta_2 \Phi(x) = \Phi(x+th) + \Phi(x-th)
-2\Phi(x) \ge 0
$$
for some fixed vector $h \in \R^d$ with $|h|=1$.
By approximation, one can assume that  $\mbox{\rm supp}(\nu)$ is a bounded convex domain and $V$, $W$ are locally H{\"o}lder.
Caffarelli's regularity theory assures that $\Phi \in C^{2, \alpha}_{loc}(\R^d)$. 

In addition, again by approximation, one can assume that $\mu$ has at most Gaussian decay, meaning that
$V(x) \le C_1 + C_2 |x|^2$ for some $C_1, C_2 \ge 0$.
Then  the following lemma holds  (see Lemma 4 in \cite{Caf})
\begin{lemma}
$\lim_{x\to \infty} \delta_2 \Phi(x) = 0$.
\end{lemma}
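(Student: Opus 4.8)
The plan is to exploit two structural facts forced by the approximating hypotheses. Since $\nu$ is carried by a bounded convex set $\Omega=\mathrm{supp}(\nu)$ and $(\nabla\Phi)_{\#}\mu=\nu$, the gradient $\nabla\Phi(x)\in\overline\Omega$ for every $x$, so $\nabla\Phi$ is bounded and $\Phi$ is a globally Lipschitz convex function; and the uniform log-concavity $D^2W\ge K>0$, kept through the approximation, makes $\Omega$ strictly convex. First I would record the elementary reduction: since $\Phi\in C^{2,\alpha}_{loc}$, Taylor's formula with integral remainder gives $\delta_2\Phi(x)=\int_0^t(t-s)\bigl[\Phi_{hh}(x+sh)+\Phi_{hh}(x-sh)\bigr]\,ds\ge 0$, and by monotonicity of $s\mapsto\langle\nabla\Phi(x+sh),h\rangle$ one also has $0\le\delta_2\Phi(x)\le t\,\bigl[\langle\nabla\Phi(x+th),h\rangle-\langle\nabla\Phi(x-th),h\rangle\bigr]$. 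Thus it suffices to prove that the oscillation of $\nabla\Phi$ over the $h$-window $[x-th,x+th]$ tends to $0$ as $x\to\infty$.

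Along any fixed line in direction $h$ this is immediate: the map $s\mapsto\langle\nabla\Phi(x_0+sh),h\rangle$ is bounded (values in the bounded interval $\{\langle y,h\rangle:y\in\Omega\}$) and nondecreasing, hence has finite limits as $s\to\pm\infty$, so the windowed oscillation, and therefore $\delta_2\Phi$, tends to $0$ as one escapes to infinity along that line. The entire difficulty is the behaviour as $x\to\infty$ in directions transverse to $h$, where this one-dimensional argument gives no uniformity.

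To handle the transverse directions I would argue by contradiction and blow-up. Suppose $x_k\to\infty$ with $\delta_2\Phi(x_k)\ge\varepsilon$, and set $\Phi_k(y)=\Phi(x_k+y)-\Phi(x_k)-\langle\nabla\Phi(x_k),y\rangle$. These are nonnegative convex functions, uniformly Lipschitz (their gradients lie in $\overline\Omega-\nabla\Phi(x_k)$), with $\Phi_k(0)=0$, $\nabla\Phi_k(0)=0$ and $\delta_2\Phi_k(0)\ge\varepsilon$; by the Blaschke/Arzel\`a--Ascoli compactness of uniformly Lipschitz convex functions a subsequence converges locally uniformly to a convex $\Phi_\infty$ with $\delta_2\Phi_\infty(0)\ge\varepsilon$, so $\Phi_\infty$ is not affine along $h$. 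Passing $\nabla\Phi(x_k)$ to a limit $p$, one shows $p\in\partial\Omega$: a point escaping to infinity in the source cannot map into $\mathrm{int}\,\Omega$, since there the inverse map $\nabla\Phi^{*}$ is finite. Then $\mathrm{range}(\nabla\Phi_\infty)\subseteq\overline\Omega-p=:\mathcal K$ with $0\in\partial\mathcal K$, and strict convexity of $\Omega$ makes $0$ an exposed point of $\mathcal K$.

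The main obstacle, and the crux of the proof, is to upgrade these facts to the statement that $\Phi_\infty$ is globally affine (equivalently, that $\nabla\Phi(x_k+\cdot)$ clusters at the single boundary point $p$ over every bounded window). Here the Gaussian decay enters: writing the Monge--Amp\`ere equation in the recentred variables, $\det D^2\Phi_k(y)=e^{W(\nabla\Phi(x_k+y))-V(x_k+y)}$, the bound $V(x)\le C_1+C_2|x|^2$ together with $V(x_k+y)\to\infty$ forces $\det D^2\Phi_\infty\equiv 0$, so $\Phi_\infty$ is degenerate; combining this degeneracy with the exposedness of $0$ in $\mathcal K$ should pin $\nabla\Phi_\infty$ to the constant $0$, collapsing the blow-up to an affine function and contradicting $\delta_2\Phi_\infty(0)\ge\varepsilon$. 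Making the passage to the limit of the Monge--Amp\`ere measure rigorous, and in particular ruling out curvature of $\Phi_\infty$ in directions tangent to $\partial\Omega$ at $p$, is precisely where the strict convexity coming from $D^2W\ge K$ must be used in full; the example to keep in mind, which shows the statement genuinely fails without it, is the product transport of a Gaussian onto the uniform measure on a cube, whose flat faces let the tangential second difference survive at infinity.
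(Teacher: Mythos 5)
The paper does not actually prove this lemma --- it defers to Lemma~4 of \cite{Caf}, whose argument is a direct mass-balance estimate rather than a blow-up --- so your proposal must stand on its own, and it has a genuine hole at exactly the decisive step. You reduce everything to showing that the blow-up limit $\Phi_\infty$ is affine, and then assert that the degeneracy $\det D^2\Phi_\infty\equiv 0$ combined with the exposedness of $0$ in $\mathcal K=\overline\Omega-p$ ``should pin'' $\nabla\Phi_\infty$ to $0$. That implication is false as stated: a convex function with vanishing Monge--Amp\`ere measure can have gradient range equal to a nontrivial segment issuing from the exposed point $0$ into the interior of $\mathcal K$ (take $\Phi_\infty(y)=f(\langle y,v\rangle)$ with $f$ convex, $f'(0)=0$, $f'\ge 0$), so degeneracy plus exposedness of the single point $p$ excludes nothing. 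What actually closes the argument --- and you have all the ingredients nearby --- is that for \emph{every} $y$, not just $y=0$, the cluster points of $\nabla\Phi(x_k+y)$ must lie on $\partial\Omega$, by the same local boundedness of $\partial\Phi^*$ on $\mathrm{int}\,\Omega$ that you invoked to place $p$ on the boundary. Then $\overline{\nabla\Phi_\infty(\R^d)}+p\subset\partial\Omega$; since the closure of the gradient range of a finite convex function is a convex set (it is squeezed between $\mathrm{dom}\,\Phi_\infty^*$ and its relative interior), strict convexity of $\partial\Omega$ forces it to be a single point, so $\Phi_\infty$ is affine and $\delta_2\Phi_\infty(0)\ge\varepsilon$ is contradicted. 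On this route the Monge--Amp\`ere degeneracy is not needed at all.

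Two further points need repair. First, strict convexity of $\mathrm{supp}\,\nu$ does \emph{not} follow from $D^2W\ge K$ ``kept through the approximation'': $e^{-|y|^2/2}$ restricted to a cube is uniformly log-concave on a support that is not strictly convex, and for that target with a product Gaussian source the optimal map is a product map, so $\delta_2\Phi$ does \emph{not} tend to zero along sequences going to infinity transversally to $h$. In other words, the lemma is genuinely false if the approximation only guarantees a bounded convex support; one must approximate through balls (or otherwise strictly convex domains), as Caffarelli does. Your cube remark shows the right instinct, but you attribute the needed strict convexity to a hypothesis that does not deliver it. Second, ``$V(x_k+y)\to\infty$'' is not what ``at most Gaussian decay'' gives: $V\le C_1+C_2|x|^2$ is an upper bound on $V$ (a lower bound on the density), and pointwise divergence of $V$ requires an extra argument combining $\sup V_{ee}<\infty$ with the integrability of $e^{-V}$. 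This second point is minor in itself, but it matters because the step of your proof that rests on it is the one that, as explained above, does not suffice anyway.
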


Thus there exists a maximum point $x_0$ of $\delta_2 \Phi(x)$.
Differentiating at $x_0$ yields
\begin{equation}
\label{maxim1}
\nabla \Phi(x_0+th) + \nabla \Phi(x_0-th) =2 \nabla \Phi(x_0),
\end{equation}
$$
D^2 \Phi(x_0+th) + D^2 \Phi(x_0-th) \le 2 D^2 \Phi(x_0).
$$
It follows from the concavity of the determinant that
\begin{align*}
\det  D^2 \Phi(x_0)
&
\ge
\det \Bigl( \frac{D^2 \Phi(x_0+th) + D^2 \Phi(x_0-th)}{2} \Bigr)
\\&
\ge
\Bigl( \det D^2 \Phi(x_0+th) \ \det D^2 \Phi(x_0-th)\Bigr)^{\frac{1}{2}}.
\end{align*}
Applying the change of variables formula
$
\det D^2 \Phi = e^{W(\nabla \Phi) -V}
$ one finally gets
\begin{align}
\label{V-W}
V(x_0 + th) + V(x_0 - th) & - 2 V(x_0) \ge
\\&
\nonumber
W(\nabla \Phi(x_0 + th)) + W(\nabla \Phi(x_0 -th))
- 2 W(\nabla \Phi(x_0)).
\end{align}

It follows from  (\ref{maxim1}) that
$ v:=
\nabla \Phi(x_0+th) - \nabla \Phi(x_0) = \nabla \Phi(x_0) -  \nabla \Phi(x_0-th).
$
Hence we get by (\ref{V-W}) that
$$
\sup V_{hh} \cdot  t^{2} \ge  K |\nabla \Phi(x_0+th) - \nabla \Phi(x_0)|^{2}
=  K | \nabla \Phi(x_0-th) - \nabla \Phi(x_0)|^{2}
= K |v|^{2}.
$$
By convexity of $\Phi$
\begin{align*}
\Phi(x_0+th) + \Phi(x_0-th) - 2\Phi(x_0)
&
\le t \langle \nabla \Phi(x_0+th) - \nabla \Phi(x_0-th), h \rangle
\\&
= 2t \langle v,h \rangle \le 2t |v|.
\end{align*}
Finally
$$
\frac{\sup_{x \in \R^d} V_{hh}}{K} 
\ge \Bigl( \frac{\delta_2 \Phi}{2t^2}\Bigr)^{2}.
$$
This clearly implies
$$
\Phi_{hh} \le 2 C
$$
with $C = \sqrt{\frac{\sup_{x \in \R^d} V_{hh}}{K}}.$
But this estimate is worse that the desired one.
To get the sharp estimate we repeat the arguments  and use 
the additional information that $\Phi_{hh} \le a_0 C$,
where $a_0 = 2$. Apply the identity
$$
\Phi(x_0+th) + \Phi(x_0-th) - 2\Phi(x_0)  = \int_{0}^{t} \langle \nabla \Phi(x_0 + sh) - \nabla \Phi(x_0 -s h), h \rangle \  ds.
$$
By convexity of $\Phi$
$\langle \nabla \Phi(x_0 + sh) - \nabla \Phi(x_0 -s  h), h \rangle \le \langle \nabla \Phi(x_0 + th) - \nabla \Phi(x_0 - t h), h \rangle$.
 One has
$$
\Phi(x_0+th) + \Phi(x_0-th) - 2\Phi(x_0)  
\le 
\int_0^t  \min \bigl( 2  a_0 C s, 2 |v| \bigr) \ ds.
$$
Computing the right-hand side and taking into account that $|v| \le Ct$, we get that
$$
\Phi(x_0+th) + \Phi(x_0-th) - 2\Phi(x_0)  
\le 
a_1 Ct^2.
$$
where $a_1 = \frac{3}{2}$.
Hence $\Phi_{hh} \le a_1 C$.
Repeating this arguments infinitely many times we get that $\Phi_{hh} \le a_n C$ and $\lim_n a_n =1$. The proof is complete.

{\bf 3) Proof via $L^p$-estimates}

See Section 6.

\begin{remark}
We note  that the original result from \cite{Caf} was slightly different from the result stated above.
Here is the exact statement proved by Caffarelli.
\end{remark}

\begin{theorem}
\label{contr2} {\bf (L. Caffarelli)}
Let $\mu = e^{-Q} \ dx$ be any Gaussian measure. Then for any measure  $\nu = e^{-Q - P} \ dx$,
where $P$ is convex, the corresponding optimal transportation $T$ is a contraction.
\end{theorem}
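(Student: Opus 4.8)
The plan is to rerun the maximum-principle computation from the sketch of Theorem~\ref{contr1}, but to take the supremum of the second derivative over the \emph{direction} $e$ as well as over the point $x$. Here $V=Q$ and $W=Q+P$, so the Gaussian hypothesis means precisely that $D^2V=D^2Q=:A$ is a \emph{constant} positive-definite matrix, while convexity of $P$ gives $D^2W=A+D^2P\ge A$. The point of maximizing over $e$ is that a direct appeal to Theorem~\ref{contr1} with a fixed direction only yields $\Phi^2_{ee}\le \langle Ae,e\rangle/\lambda_{\min}(A)$, i.e.\ a bound by the condition number of $A$, which exceeds $1$ unless $\mu$ is isotropic; the extra gain has to come from choosing the direction adaptively.

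Assuming the regularity of $\Phi$ exactly as in the sketch of Theorem~\ref{contr1}, I would start from $Q=(Q+P)(\nabla\Phi)-\log\det D^2\Phi$ and use the very same second-order identity to write, for a unit vector $e$ and with $w:=D^2\Phi\cdot e$,
\begin{equation*}
\langle Ae,e\rangle=\langle D^2W(\nabla\Phi)\,w,w\rangle+\langle\nabla W(\nabla\Phi),\nabla\Phi_{ee}\rangle-\mathrm{Tr}\,(D^2\Phi)^{-1}D^2\Phi_{ee}+\mathrm{Tr}\bigl[(D^2\Phi)^{-1}D^2\Phi_{e}\bigr]^2 .
\end{equation*}
Now set $M=\sup_{x,\,|e|=1}\Phi_{ee}(x)=\sup_x\|D^2\Phi(x)\|$ and suppose it is attained at a pair $(x_0,e_0)$. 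Optimality in $e$ over the unit sphere forces $e_0$ to be a top eigenvector of $D^2\Phi(x_0)$, so $w=D^2\Phi(x_0)e_0=M e_0$; optimality in $x$ gives $\nabla\Phi_{e_0e_0}(x_0)=0$ and $D^2\Phi_{e_0e_0}(x_0)\le 0$. Feeding this into the identity, the gradient term vanishes, the two trace terms are nonnegative (the last is the trace of the square of a symmetric matrix, the other because $(D^2\Phi)^{-1}>0$ and $D^2\Phi_{e_0e_0}\le 0$), and $D^2W\ge A$. Hence $\langle Ae_0,e_0\rangle\ge\langle Aw,w\rangle=M^2\langle Ae_0,e_0\rangle$, and since $\langle Ae_0,e_0\rangle>0$ we conclude $M\le 1$, i.e.\ $T=\nabla\Phi$ is a contraction.

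The main obstacle is the one already present in Theorem~\ref{contr1}: justifying that the supremum $M$ is attained on the noncompact space $\R^d$, which amounts to controlling $D^2\Phi$ at infinity. I would handle this by the approximation scheme of the incremental-quotient proof — reduce to $\nu$ supported on a bounded convex domain with $V,W$ locally H\"older and $\mu$ of at most Gaussian decay, invoke Caffarelli's $C^{2,\alpha}_{loc}$ regularity, and use a decay lemma in the spirit of the quoted one to keep the maximizing sequence in a compact set. A fully rigorous alternative is to rerun the incremental-quotient argument itself: the quadratic structure of $Q$ turns the analogue of~(\ref{V-W}) into $t^2\langle Ah,h\rangle\ge\langle Av,v\rangle$ with $v=\nabla\Phi(x_0+th)-\nabla\Phi(x_0)$, and the delicate point becomes combining the choice of direction $h$ with the iteration $a_n\to 1$ that produces the sharp constant rather than a multiple of it.
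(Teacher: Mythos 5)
Your argument is essentially the paper's own proof: both maximize $\Phi_{ee}$ jointly over $x$ and the unit direction $e$, use that the optimal $e$ is an eigenvector of $D^2\Phi$ so that $D^2\Phi\cdot e = \Phi_{ee}\,e$, and exploit the constancy of $Q_{ee}$ (i.e.\ $Q_{ee}(\nabla\Phi)=Q_{ee}$) to cancel the quadratic form and conclude $\Phi_{ee}\le 1$. Your added remarks on why a fixed-direction application of Theorem~\ref{contr1} only gives the condition number, and on justifying attainment of the supremum, are correct supplements to what the paper leaves implicit.
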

{\bf Sketch of the proof:}
Let us apply the maximum principle arguments. We are looking for a maximum of $\Phi_{ee}(x)$
among of all unit $e$ and $x \in \R^d$. Apply the relation obtained above
\begin{align*}
Q_{ee}
& =
\langle D^2 (Q+P)(\nabla \Phi)  D^2 \Phi \cdot e,  D^2 \Phi \cdot e \rangle
+ 
\langle \nabla (Q+P)(\nabla \Phi),  \nabla \Phi_{ee} \rangle
\\&
-
\mbox{\rm Tr} (D^2 \Phi)^{-1} D^2 \Phi_{ee}
+
\mbox{Tr} \Bigl[ (D^2 \Phi)^{-1} D^2 \Phi_{e}\Bigr]^2.
\end{align*}
By the same reasons as above
\begin{align*}
Q_{ee}
\ge
\langle D^2 (Q+P)(\nabla \Phi)  D^2 \Phi \cdot e,  D^2 \Phi \cdot e \rangle.
\end{align*}
Now take into account that $P$ is convex and, in addition, $e$ must be an eigenvector of $D^2 \Phi$. Hence we obtain
$$
Q_{ee} \ge \Phi^2_{ee} \cdot Q_{ee}(\nabla \Phi).
$$
Taking into account that $Q_{ee}$ is constant, we obtain the claim.

\section{General (uniformly) log-concave measures}

The incremental quotients proof  can be easily extended to the case of
measures which are uniformly log-concave in a generalized case. 
The latter means that the potential $W$ satisfies
$$
W(x+y)+W(x-y) - W(x) \ge \delta(|y|)
$$
for some increasing function $\delta$.
The following result has been proved in \cite{Kol}.

\begin{theorem}
\label{hoelder}
Assume that $V$ and $W$ satisfy
$$
V(x+ y) + V(x-y) -2V(x) \le A_p |y|^{p+1},
$$
$$
W(x+ y) + W(x-y) -2W(x) \ge A_q |y|^{q+1},
$$
for some $0 \le p \le 1$, $1 \le q$, $A_p>0$, $A_q>0$.

Then  $\Phi$
satisfies
\begin{equation}
\label{var-Hoelder}
\Phi(x+th) + \Phi(x-th)
-2\Phi(x) \le 2\Bigl( \frac{A_p}{A_q}\Bigr)^{\frac{1}{q+1}} t^{1+\alpha}
\end{equation}
for every unit vector $h \in \R^d$
with $\alpha = \frac{p+1}{q+1}$.
\end{theorem}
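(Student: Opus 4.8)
The plan is to follow the incremental quotients proof of Theorem \ref{contr1}, replacing the Gaussian hypothesis on $V$ and the uniform convexity of $W$ by the two Hölder-type bounds assumed here. First I would fix a unit vector $h$ and a number $t>0$ and consider the second difference
$$
\delta_2 \Phi(x) = \Phi(x+th) + \Phi(x-th) - 2\Phi(x),
$$
which is nonnegative by convexity of $\Phi$. By the same approximation reductions used for Theorem \ref{contr1} (bounded convex support of $\nu$, locally Hölder densities, Caffarelli's $C^{2,\alpha}_{loc}$ regularity, and a decay-at-infinity lemma of the type of Lemma~4 in \cite{Caf}), I would arrange that $\delta_2\Phi$ attains an interior maximum at some point $x_0$.

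At $x_0$ the first-order condition reads $\nabla\Phi(x_0+th) + \nabla\Phi(x_0-th) = 2\nabla\Phi(x_0)$ and the second-order condition reads $D^2\Phi(x_0+th) + D^2\Phi(x_0-th) \le 2D^2\Phi(x_0)$. Exactly as in the derivation of (\ref{V-W}), the concavity of the determinant together with the change of variables formula $\det D^2\Phi = e^{W(\nabla\Phi)-V}$ yields
$$
V(x_0+th) + V(x_0-th) - 2V(x_0) \ge W(\nabla\Phi(x_0+th)) + W(\nabla\Phi(x_0-th)) - 2W(\nabla\Phi(x_0)).
$$

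Now both hypotheses enter in one line each. Setting $v = \nabla\Phi(x_0+th) - \nabla\Phi(x_0)$, the first-order condition gives $\nabla\Phi(x_0-th) = \nabla\Phi(x_0) - v$, so the right-hand side above equals the symmetric second difference of $W$ at $\nabla\Phi(x_0)$ with increment $v$, hence is at least $A_q|v|^{q+1}$; the left-hand side is at most $A_p|th|^{p+1} = A_p t^{p+1}$ since $|h|=1$. Thus $A_q|v|^{q+1} \le A_p t^{p+1}$, that is,
$$
|v| \le \Bigl(\frac{A_p}{A_q}\Bigr)^{\frac{1}{q+1}} t^{\alpha}, \qquad \alpha = \frac{p+1}{q+1}.
$$
Finally, as in Theorem \ref{contr1}, convexity of $\Phi$ gives $\delta_2\Phi(x_0) \le t\langle\nabla\Phi(x_0+th) - \nabla\Phi(x_0-th), h\rangle = 2t\langle v,h\rangle \le 2t|v|$; inserting the bound on $|v|$ produces (\ref{var-Hoelder}) at $x_0$, and since $x_0$ is a global maximizer of $\delta_2\Phi$ the same bound holds for every $x$.

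I expect the main obstacle to be the approximation step securing an interior maximizer of $\delta_2\Phi$: one must verify that the generalized uniform log-concavity encoded by the lower $W$-bound still forces $\delta_2\Phi$ to vanish at infinity, so that the supremum is attained and the pointwise inequalities at $x_0$ are legitimate. Once the maximizer is in hand, the rest is the linear-in-$|v|$ convexity estimate combined with the two direct applications of the hypotheses; notably, in contrast to the sharp constant in Theorem \ref{contr1}, no iterative improvement is needed here, since the factor $2$ is already present in the target bound (\ref{var-Hoelder}).
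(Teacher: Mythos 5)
Your proposal is correct and follows exactly the route the paper indicates for this theorem, namely the incremental quotients argument of Caffarelli adapted by replacing the Gaussian upper bound on $V$ and the uniform convexity of $W$ with the two second-difference hypotheses; the resulting bound $\delta_2\Phi \le 2t|v| \le 2(A_p/A_q)^{1/(q+1)}t^{1+\alpha}$ reproduces the stated constant, and you correctly observe that no iterative sharpening is needed since the constant in (\ref{var-Hoelder}) is not claimed to be optimal.
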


\begin{remark}
The constant  in (\ref{var-Hoelder}) is not optimal in general.
\end{remark}

It follows from (\ref{var-Hoelder}) that $\nabla \Phi$ is globally H{\"o}lder.
This fact is actualy true without any convexity assumption  
on $\Phi$, but the convex case is more simple and the result follows from the following lemma communicated to the authors
by Sasha Sodin.

\begin{lemma}
\label{Sodin-lem}
For every convex $f$ and unit vector $h$  one has
$$
|\nabla f(x+th) -f(x)|
\le \frac{2}{t} \sup_{v: |v|=1}  \Bigl( f(x+2tv) + f(x-2tv) - 2 f(x)
 \Bigr).
$$
\end{lemma}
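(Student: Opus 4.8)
The plan is to bound the vector $g := \nabla f(x+th) - \nabla f(x)$ directly by combining a few first-order (subgradient) inequalities, which is all that the convexity of $f$ provides. We may assume $g \neq 0$ and set $e := g/|g|$; the goal will be the estimate $|g| \le \tfrac{1}{t}M$, where $M := \sup_{|v|=1}\bigl(f(x+2tv)+f(x-2tv)-2f(x)\bigr)$ is the second difference on the right-hand side. This is in fact stronger than the asserted bound $\tfrac{2}{t}M$, so establishing it is enough; the extra slack reflects that the constant in the lemma is not optimal.

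First I would record the three convexity inequalities that feed the argument. Fix a unit vector $v$. The subgradient inequality anchored at $x+th$ reads $f(x+2tv) \ge f(x+th) + \langle \nabla f(x+th),\, 2tv - th\rangle$; the one anchored at $x$ reads $f(x-2tv) \ge f(x) - 2t\langle \nabla f(x),\, v\rangle$; and a third, again anchored at $x$, reads $f(x+th) \ge f(x) + t\langle \nabla f(x),\, h\rangle$. (If $f$ is merely convex one replaces each gradient by a subgradient; at the two points that matter we use the differentiability implicit in the notation $\nabla f$.)

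Next I would add the first two inequalities and eliminate the term $f(x+th)$ using the third. After the $f(x)$ contributions cancel in pairs and one regroups, the gradients of $f$ at $x$ and at $x+th$ assemble into $g$, leaving
\begin{equation*}
f(x+2tv) + f(x-2tv) - 2f(x) \;\ge\; -\,t\,\langle g, h\rangle + 2t\,\langle g, v\rangle .
\end{equation*}
Since this holds for every unit $v$, I specialize to the most favourable choice $v = e = g/|g|$, which turns the last term into $2t|g|$.

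Finally, to control the cross term I would invoke Cauchy--Schwarz: as $|h|=1$ we have $\langle g,h\rangle \le |g|$, hence $-t\langle g,h\rangle + 2t|g| \ge t|g|$. Combining this with the displayed inequality and the definition of $M$ gives $t|g| \le M$, that is $|\nabla f(x+th) - \nabla f(x)| \le \tfrac{1}{t}M \le \tfrac{2}{t}M$, which is the claim. I expect the only delicate point to be the appearance of $-t\langle g,h\rangle$ with an a~priori unfavourable sign; this is the main obstacle, but it is harmless because $\langle g,h\rangle$ is dominated by $|g|$, which is in turn absorbed by the $2t|g|$ produced by the probe direction $v=e$ (in fact monotonicity of $\nabla f$ gives $\langle g,h\rangle \ge 0$, so the term only helps, though Cauchy--Schwarz alone already closes the argument). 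It is worth stressing that using probes of size $2t$ rather than $t$ is essential: with size $t$ the gain term would be only $t\langle g,v\rangle$, which cannot dominate the competing $t\langle g,h\rangle$, and the estimate would collapse.
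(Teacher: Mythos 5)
Your argument is correct, and in fact there is nothing in the paper to compare it against: Lemma \ref{Sodin-lem} is stated without proof (it is merely attributed to S.~Sodin), so your write-up supplies a proof the paper omits. The chain of three subgradient inequalities anchored at $x+th$ and at $x$, summed and then probed with $v=e=g/|g|$, does yield $f(x+2tv)+f(x-2tv)-2f(x)\ge 2t\langle g,v\rangle-t\langle g,h\rangle$ and hence $|g|\le M/t$, which is stronger than the stated bound $2M/t$ (consistent with the paper's remark that its constants are not optimal). Note also that you correctly read the left-hand side as $|\nabla f(x+th)-\nabla f(x)|$, repairing an evident typo in the statement. One small inaccuracy: your parenthetical claim that monotonicity of $\nabla f$ gives $\langle g,h\rangle\ge 0$ and that ``the term only helps'' is backwards --- since the term enters as $-t\langle g,h\rangle$ in a lower bound for $M$, nonnegativity of $\langle g,h\rangle$ is the \emph{unfavourable} case, and what saves the argument is precisely the Cauchy--Schwarz upper bound $\langle g,h\rangle\le|g|$ that you invoke; since you rely only on the latter, the proof stands. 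For full generality (e.g.\ when $f=\Phi$ is convex but not everywhere differentiable) one should phrase the three inequalities with subgradients at the two base points, as you indicate.
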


Using this lemma one can extend the H{\"o}lder regularity  result.
\begin{theorem}
\label{MS-conc}
Assume that
$$
V(x+ y) + V(x-y) -2V(x) \le |y|^{2},
$$
and
$$
W(x+y)+W(x-y) - W(x) \ge \delta(|y|)
$$
with some non-negative increasing function $\delta$.
Then
$$
|\nabla \Phi(x) - \nabla \Phi(y) | \le 8 \delta^{-1}(4|x-y|^2).
$$
\end{theorem}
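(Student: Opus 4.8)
The plan is to combine the incremental quotients technique used in the second proof of Theorem~\ref{contr1} with Sodin's Lemma~\ref{Sodin-lem}. The argument naturally splits into two stages: first I would establish a modulus-of-continuity bound on the second differences of $\Phi$, and then convert that into the desired bound on the gradient increments. Throughout I assume the same approximation set-up as in the incremental quotients proof (bounded convex support of $\nu$, locally H{\"o}lder potentials), so that Caffarelli's regularity theory gives $\Phi \in C^{2,\alpha}_{loc}(\R^d)$ and the pointwise maximum arguments are legitimate.

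First I would fix a unit vector $h$ and $t>0$ and consider the incremental quotient
$$
\delta_2 \Phi(x) = \Phi(x+th) + \Phi(x-th) - 2\Phi(x) \ge 0.
$$
As in the proof of Theorem~\ref{contr1}, one verifies that $\delta_2\Phi$ decays at infinity and therefore attains a maximum at some $x_0$, where the first-order condition $\nabla \Phi(x_0+th) + \nabla \Phi(x_0-th) = 2\nabla \Phi(x_0)$ and the second-order condition $D^2\Phi(x_0+th) + D^2\Phi(x_0-th) \le 2 D^2\Phi(x_0)$ hold. Repeating verbatim the concavity-of-determinant argument together with the Monge--Amp{\`e}re relation $\det D^2\Phi = e^{W(\nabla\Phi)-V}$ yields
$$
V(x_0+th)+V(x_0-th)-2V(x_0) \ge W(\nabla\Phi(x_0+th)) + W(\nabla\Phi(x_0-th)) - 2W(\nabla\Phi(x_0)).
$$

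Next I would exploit the two hypotheses. Setting $v := \nabla\Phi(x_0+th)-\nabla\Phi(x_0) = \nabla\Phi(x_0)-\nabla\Phi(x_0-th)$, which the first-order condition makes equal, the right-hand side above is a centered second difference of $W$ at $\nabla\Phi(x_0)$ in the direction $v$, so the generalized uniform log-concavity bounds it below by $\delta(|v|)$; the left-hand side is at most $|th|^2 = t^2$ by the hypothesis on $V$. Hence $\delta(|v|)\le t^2$, i.e. $|v| \le \delta^{-1}(t^2)$. Convexity of $\Phi$ then controls the second difference itself,
$$
\delta_2\Phi(x_0) \le t\langle \nabla\Phi(x_0+th)-\nabla\Phi(x_0-th),\, h\rangle = 2t\langle v,h\rangle \le 2t|v| \le 2t\,\delta^{-1}(t^2),
$$
and since $x_0$ is the maximizer this estimate holds for every $x$ and every unit $h$.

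Finally I would feed this into Lemma~\ref{Sodin-lem}. The lemma bounds $|\nabla\Phi(x+th)-\nabla\Phi(x)|$ by $\tfrac{2}{t}$ times the supremum of second differences taken at scale $2t$; inserting the estimate just obtained with $2t$ in place of $t$ gives $\tfrac{2}{t}\cdot 2(2t)\,\delta^{-1}(4t^2) = 8\,\delta^{-1}(4t^2)$. Writing an arbitrary pair of points as $x$ and $x+th$ with $t=|x-y|$ then produces the claimed inequality $|\nabla\Phi(x)-\nabla\Phi(y)| \le 8\,\delta^{-1}(4|x-y|^2)$. I expect the main obstacle to be the opening regularity/approximation step: one must guarantee that $\delta_2\Phi$ genuinely attains an interior maximum, which requires an analog of the decay lemma $\lim_{x\to\infty}\delta_2\Phi=0$ under the present semiconcavity hypothesis $V(x+y)+V(x-y)-2V(x)\le|y|^2$ rather than the Gaussian-decay assumption used earlier, and one must also check that $\delta^{-1}$ is well defined, which follows from the monotonicity of $\delta$.
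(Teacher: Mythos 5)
Your proposal is correct and follows essentially the route the paper intends: it reproduces the incremental-quotient maximum argument of Theorems \ref{contr1}/\ref{hoelder} to get the second-difference bound $\delta_2\Phi \le 2t\,\delta^{-1}(t^2)$, and then applies Lemma \ref{Sodin-lem} at scale $2t$ to obtain the constant $8$ and the argument $4|x-y|^2$ exactly as stated. The approximation concern you flag at the end is in fact harmless here, since the hypothesis $V(x+y)+V(x-y)-2V(x)\le|y|^2$ already forces $V(x)\le C_1+C_2|x|^2$ (as $V-|x|^2/2$ is concave), which is precisely the Gaussian-decay condition under which the paper's decay lemma for $\delta_2\Phi$ applies.
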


Applying this estimate one can transfer the famous
Gaussian Sudakov-Tsirelson isoperimetric inequality to any (generalized) uniform log-concave measure.
Recall (see \cite{Bo}), that the standard Gaussian measure
$\gamma$ satisfies the Gaussian isoperimetric inequality
$$
\gamma(A^r) \ge \Phi(\Phi^{-1}(\gamma(A)) +r),
$$
where $A^r = \{x\in \R^d: \exists a \in A: |a-x|<r\}$, $\Phi(x) = \frac{1}{\sqrt{2\pi}} \int_{-\infty}^{x} e^{-\frac{t^2}{2}} dt$.

Consequently,  applying  Theorem \ref{MS-conc} to $\mu=\gamma$ and 
 $\nu = e^{-W} dx$ with $W$
 satisfying
$$
W(x+y)+W(x-y) - W(x) \ge \delta(|y|),
$$
we get
$$
\nu\bigl( A_{r} \bigr) \ge  \Phi\Bigl(\Phi^{-1}(\nu(A)) +  \frac{1}{2} \sqrt{\delta(r/8)} \Bigr).
$$
In particular, $\nu$ admits the following dimension-free concentration
property:
$$
\nu\bigl( A_{r}  \bigr) \ge  1- \frac{1}{2} \exp \Bigl( -\frac{1}{8} \ \delta(r/8) \Bigr)
$$
with $\nu(A) \ge 1/2$. A similar result has been established  by S.~Sodin and E.~Milman in \cite{MilSod} by localization arguments.
Note that according to results of E.~Milman \cite{Milman08}  concentration and isoperimetric inequalities are in a sence equivalent
for log-concave measures.

\section{Lebesgue measure on a convex set}

In this section we discuss the following problem.

\begin{problem}
Given a nice (product) probability  measure $\mu$ (e.g. Gaussian or exponential)
estimate effectively the Lipschitz constant of the optimal mapping pushing forward
$\mu$ onto the normalized Lebesgue measure on a convex set $K$. 
\end{problem}

This problem was motivated in particular by the famous Kannan-Lov{\'a}sz-Simonovits conjecture (KLS-conjecture).
Recall that the Cheeger $C_{ch}(K)$ constant of a convex body $K$ is the smallest constant such that the inequality
$$
\int_K \Bigl|f - \frac{1}{\lambda(K)} \int_K f dx \Bigr | \ dx \le C_{ch}(K) \int_{K} |\nabla f| \ dx
$$ holds
for every smooth $f$.

{\bf KLS conjecture}.
There exists an universal constant $c$ such that
$$
C_{ch}(K) \le c
$$
for every convex  $K \subset \R^d$ satisfying
$$
 \int_K  x_i \ dx = 0, \ \  \frac{1}{\lambda(K)} \int_K  x_i x_j \ dx = \delta_{i}^{j}.
$$

More on the KLS conjecture see in \cite{KLS}, \cite{Bob07}, \cite{Milman08}.

Some estimates of the Lipschitz constant for optimal transportation of convex bodies have been obtained in \cite{Kol}. The arguments below generalize 
the maximum principle proof of Caffarelli. Let $\nabla \Phi$ be the optimal transportation mapping pushing forward $e^{-V} dx$ to $\frac{1}{\lambda(K)} \lambda|_{K}$.
Let us fix a unit vector $h$ . We are looking for a function
$\psi$ such that 
$$
\psi(\Phi_h) + \log \Phi_{hh}
$$
is bounded from above.
Assume that $x_0$ is the maximum point. One has at this point
\begin{equation}
\label{max-grad}
\psi'(\Phi_h) \nabla \Phi_h + \frac{1}{\Phi_{hh}} \nabla \Phi_{hh}=0
\end{equation}
\begin{equation}
\label{max-hess}
\psi''(\Phi_h) \nabla \Phi_h \oplus \nabla \Phi_h  + \psi'(\Phi_h) D^2 \Phi_h + \frac{1}{\Phi_{hh}} D^2 \Phi_{hh} - \frac{1}{\Phi^2_{hh}} 
 \nabla \Phi_{hh} \oplus \nabla \Phi_{hh} \le 0.
\end{equation}
Differentiation the change of variables formula gives (see Section 1)
$$
V_h = - \mbox{\rm Tr} (D^2 \Phi)^{-1} D^2 \Phi_{h},
$$
$$
V_{hh}
=
-
\mbox{\rm Tr} (D^2 \Phi)^{-1} D^2 \Phi_{hh}
+
\mbox{Tr} \Bigl[ (D^2 \Phi)^{-1} D^2 \Phi_{h}\Bigr]^2.
$$
Multiply (\ref{max-hess}) by $(D^2 \Phi)^{-1}$, take the trace and plug in the expression for 
$V_{hh}$ into the formula. One  obtains
\begin{align*}
V_{hh} &
\ge - \frac{1}{\Phi_{hh}} \mbox{\rm Tr} \bigl[ (D^2 \Phi)^{-1} \cdot \nabla \Phi_{hh} \oplus \nabla \Phi_{hh} \bigr]
+ {\Phi_{hh}} \cdot \psi^{''}(\Phi_h) \mbox{\rm Tr} \bigl[ (D^2 \Phi)^{-1} \cdot \nabla \Phi_{h} \oplus \nabla \Phi_{h} \bigr]
\\&
+
\Phi_{hh} \cdot \psi'(\Phi_h)  \mbox{\rm Tr} \Bigl[ (D^2 \Phi)^{-1} D^2 \Phi_{h}\Bigr]
+ \mbox{Tr} \Bigl[ (D^2 \Phi)^{-1} D^2 \Phi_{h}\Bigr]^2.
\end{align*}
Remark that $ \mbox{\rm Tr} \bigl[ (D^2 \Phi)^{-1} \cdot \nabla \Phi_{h} \oplus \nabla \Phi_{h} \bigr] =\Phi_{hh}$. 
One obtains  from (\ref{max-grad}) that
$ \nabla \Phi_{hh} =- \Phi_{hh} \cdot \psi'(\Phi_h) \nabla \Phi_h$. Plugging this into the inequality for $V_{hh}$ one gets
$$
V_{hh} 
\ge \Phi^2_{hh} \bigl[ \psi^{''} - (\psi')^2\bigr] \circ \Phi_h
+
\Phi_{hh} \cdot \psi'(\Phi_h)  \mbox{\rm Tr} \Bigl[ (D^2 \Phi)^{-1} D^2 \Phi_{h}\Bigr]
+ \mbox{Tr} \Bigl[ (D^2 \Phi)^{-1} D^2 \Phi_{h}\Bigr]^2.
$$
Note that 
$$
 \mbox{\rm Tr} \Bigl[ (D^2 \Phi)^{-1} D^2 \Phi_{h}\Bigr] =  \mbox{\rm Tr} C, \ \ 
 \mbox{Tr} \Bigl[ (D^2 \Phi)^{-1} D^2 \Phi_{h}\Bigr]^2 =  \mbox{Tr} C^2,
$$
where
$$
 C = (D^2 \Phi)^{-1/2} (D^2 \Phi_{h}) (D^2 \Phi)^{-1/2}
$$
is a symmetric matrix.
Hence, by the Cauchy inequality
$$
V_{hh} 
\ge \Phi^2_{hh} \Bigl[ \psi^{''} - \Bigl(1 + \frac{d}{4}\Bigr) (\psi')^2\Bigr] \circ \Phi_h.
$$
Now assume that $V_{hh}$ is bounded from above by a constant $C$. Let $\psi$ be a function satisfying
$$\psi^{''} - \Bigl(1 + \frac{d}{4}\Bigr) (\psi')^2  \ge e^{2 \psi}.$$  Then we get
$$
C \ge \Phi^2_{hh}(x_0) e^{2 \psi(\Phi_h(x_0))} = \sup_{x \in \R^d} \Phi^2_{hh} e^{2 \psi(\Phi_h)}.
$$

In particular, choosing carefully $\psi$ one can obtain the following statement (see \cite{Kol} for details).

\begin{theorem}
\label{set-image}
1) Optimal transportation $T$ of the standard Gaussian measure $\gamma$ onto $\frac{1}{\lambda(K)} \lambda|_{K}$, where $K$ is convex,
satisfies
$$
\|DT\| \le  c \sqrt{d} \ \mbox{\rm diam}(K),
$$
where $c$ is an universal constant and  $\mbox{\rm diam}(K)$ is the diameter of $K$.

2) Optimal transportation $T$ between $\mu = e^{-V} \ dx$ and  $\frac{1}{\lambda(K)} \lambda|_{K}$, with
$ V_{hh} \le C $, $|V_h| \le C$ for some $C$,  satisfies
$$
\|DT\| \le  c \ \mbox{\rm diam}(K),
$$
where $c$  depends only on $C$.
\end{theorem}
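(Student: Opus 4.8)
The plan is to run the maximum-principle scheme already set up above and to estimate $\|DT\|=\sup_x\sup_{|h|=1}\Phi_{hh}$ (the last equality holds because $DT=D^2\Phi\ge 0$, so its operator norm is its largest eigenvalue $\sup_{|h|=1}\Phi_{hh}$). Fixing a unit $h$, I would choose the auxiliary function $\psi$ so that $\psi(\Phi_h)+\log\Phi_{hh}$ is controlled at its maximum point $x_0$. The two parts differ only in how the terms $\mathrm{Tr}\,C$ and $\mathrm{Tr}\,C^2$ are treated, and this is precisely where the factor $\sqrt d$ is lost or saved. The crucial structural remark, used throughout, is that $\Phi_h=\langle\nabla\Phi,h\rangle$ takes values in an interval $I_h$ of length at most $\mbox{\rm diam}(K)=:D$, since $\nabla\Phi$ maps into $K$; thus $\psi$ need only be defined on an interval of length $D$.

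For part (2) I would keep the intermediate inequality obtained before applying the Cauchy inequality and exploit the extra hypothesis $|V_h|\le C$. Differentiating the change-of-variables formula gives $V_h=-\mathrm{Tr}\,C$, so now $\mathrm{Tr}\,C=-V_h$ is \emph{bounded}, and no dimensional estimate is needed. Dropping $\mathrm{Tr}\,C^2=V_h^2\ge 0$ and using $V_{hh}\le C$, $|V_h|\le C$, one arrives at $x_0$, with $M:=\Phi_{hh}(x_0)$, at the scalar inequality
$$\bigl[\psi''-(\psi')^2\bigr]\circ\Phi_h\cdot M^2 - C\,|\psi'(\Phi_h)|\,M - C\le 0.$$
It then suffices to pick $\psi$ on $I_h$ with $\psi''-(\psi')^2\ge\kappa>0$ and $|\psi'|\le L$; writing $\psi=-\log g$ this reads $g''\le-\kappa g$, $g>0$, solved by $g(s)=\cos\bigl(\sqrt\kappa(s-s_0)\bigr)$ on a sub-quarter-period. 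Choosing $\sqrt\kappa=\pi/(2D)$ lets $I_h$ fit and gives $\kappa\asymp D^{-2}$, $L\asymp D^{-1}$; solving the quadratic yields $M\le c(C)\,D$, which is the claim.

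For part (1) the source is Gaussian, so $V_{hh}\equiv 1$, but $V_h=\langle x,h\rangle$ is \emph{unbounded}; hence I cannot keep $\mathrm{Tr}\,C$ and must instead use $\mathrm{Tr}\,C^2\ge\frac1d(\mathrm{Tr}\,C)^2$, which produces exactly the factor $1+\tfrac d4$ in the master inequality. I would then look for $\psi$ solving $\psi''-(1+\tfrac d4)(\psi')^2=e^{2\psi}$ on $I_h$; with $g=e^{-\psi}$ and $\beta:=1+\tfrac d4$ this becomes $-gg''-(\beta-1)(g')^2=1$, $g>0$. Treating $p=g'$ as a function of $g$ and setting $q=p^2$ turns this into a linear first-order equation, whose integration gives the first integral $(g')^2=\frac1{\beta-1}\bigl[(g_0/g)^{2(\beta-1)}-1\bigr]$ for the symmetric solution with $\max g=g_0$. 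The master inequality with $V_{hh}=1$ forces $\psi(\Phi_h)+\log\Phi_{hh}\le 0$, i.e. $\Phi_{hh}\le e^{-\psi(\Phi_h)}=g(\Phi_h)\le g_0$, so everything reduces to minimizing $g_0$ subject to $g$ staying positive across an interval of length $D$. Computing the half-width $\int_0^{g_0}dg/|g'|=\frac{g_0}{\sqrt{\beta-1}}\int_0^{\pi/2}(\sin\phi)^{1/(\beta-1)}\,d\phi$, where the last integral lies between $1$ and $\tfrac\pi2$, shows the solution covers an interval of length $\asymp g_0/\sqrt{\beta-1}$; requiring this to exceed $D$ gives the optimal $g_0\asymp\sqrt{\beta-1}\,D\asymp\sqrt d\,D$, hence $\|DT\|\le c\sqrt d\,\mbox{\rm diam}(K)$.

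The main obstacle I anticipate is not any of these computations but the justification of the maximum principle itself: one must know a priori that $\psi(\Phi_h)+\log\Phi_{hh}$ attains an interior maximum at a finite point $x_0$, with enough regularity ($\Phi\in C^3$, say) to differentiate twice. For a compactly supported target on $K$ and a Gaussian (or merely $C$-bounded) source this needs Caffarelli's interior regularity together with an approximation and decay argument controlling $\Phi_{hh}$ at infinity, analogous to the lemma $\delta_2\Phi\to 0$ from the incremental-quotient proof. Handling the noncompactness of the source and the singular $\log\det$ term is the delicate point; once attainment and smoothness are secured, the algebra and the ODE analysis above go through as stated.
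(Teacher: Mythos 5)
Your proposal is correct and follows essentially the same route as the paper: the maximum principle applied to $\psi(\Phi_h)+\log\Phi_{hh}$, the master inequality derived in this section, the observation that $\Phi_h$ ranges over an interval of length $\mathrm{diam}(K)$, and then the careful choice of $\psi$ (the ODE analysis with $g=e^{-\psi}$ and the first integral $(g')^2=\frac{1}{\beta-1}[(g_0/g)^{2(\beta-1)}-1]$) that the paper defers to the reference; your use of $\mathrm{Tr}\,C=-V_h$ in part (2) to avoid the dimensional Cauchy step is exactly what makes the constant dimension-free there. One harmless slip: $\mathrm{Tr}\,C^2$ is not equal to $V_h^2$ (only $\mathrm{Tr}\,C=-V_h$ holds), but since you use nothing beyond $\mathrm{Tr}\,C^2\ge 0$ to discard that term, the argument is unaffected.
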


Unfortunately, estimates of Theorem \ref{set-image}
are not strong enough to recover even known results on the Cheeger  constant for convex bodies. 
This gives raise to the following problem.

\begin{problem}
Does there exist any dimension-free estimate 
for $\|DT\|$, when $\mu = \gamma$ and $\nu = \frac{1}{\lambda(K)} \lambda|_{K}$?
The same for the case when $\mu$  is the product of exponentional distributions.
\end{problem}

Note, that it would be enough for our purpose to have a integral norm estimate $\int \|DT\|^p \ d\gamma$, $p \ge 1$.
This follows form the result of E. Milman \cite{Milman08} about equivalence of norms for log-concave measures.

\section{Contraction for the mass transport generated by semigroups}

A contraction result for another type of mass transport has been obtained recently in \cite{KimMilman} by Y.-H~Kim and E.~Milman.
The idea of the construction of this transportation  mapping goes back to J.~Moser.

Consider the diffusion semigroup $P_t = e^{tL}$ denerated by 
$$
L = \Delta- \langle \nabla V, \nabla \rangle = e^{V} \mbox{div} (e^{-V} \cdot \nabla)
$$
and the flow of probability measures
$$
\nu_t =P_t  (e^{-W+V}) \cdot \mu.
$$
Clearly, $\mu$ is  the invariant measure for $P_t$, $\nu_0 = \nu$, and $\nu_{\infty} = \mu$.

Let us write the transport equation  for $\nu_t$:
$$
\frac{d}{dt} \nu_t
= L P_t (e^{-W+V})  \cdot \mu
=  \mbox{div}  \bigl[ \nabla P_t (e^{-W+V}) \cdot e^{-V}  \bigr]
=  \mbox{div}  \bigl[ \nabla \log  P_t (e^{-W+V}) \cdot \nu_t  \bigr].
$$
The corresponding flow of diffeomorphisms
is governed by the equation
\begin{equation}
\label{Lagr}
\frac{d}{dt} S_t =  -\nabla \log  P_t (e^{-W+V}) \circ S_t, \  \  S_0 = \mbox{Id},
\end{equation}
where $\nu_t$ and $S_t$ are related by 
$$
{\nu_t} = \nu \circ S^{-1}_t.
$$
In particular, the limiting map $S_{\infty} = \lim_{t \to \infty} S_t$
pushes forward $\nu$ to $\mu$. We denote the inverse mappings by $T_t$:
$$
T_t \circ S_t = \mbox{Id}, \ \  T = \lim_{t \to \infty} T_t.
$$

The contraction property for $T=S^{-1}$ is equivalent to the expansion property of $S$. It is sufficient to show that
$(DS_t)^* DS_t \ge \mbox{Id}$.
Using  (\ref{Lagr}) one gets
$$
\frac{d}{dt} DS_t(x) = -DW_t(S_t) \cdot DS_t, \ \ W_t =   \nabla \log  P_t (e^{-W+V}).
$$
Hence
$$
\frac{d}{dt} (DS_t)^* DS_t  =
2 (DS_t)^* \cdot   DW_t(S_t) \cdot DS_t.
$$
Clearly, if 
$$
  DW_t(S_t)  = -D^2  \log  P_t (e^{-W+V}) \ge 0, 
$$
then $S_t$ has the desired expansion property.

Assume now that the function $U$ defined by
$$
\nu = e^{-U} \cdot \mu, \ U = W-V,
$$
 is convex. Then the property $- D^2  \log  P_t (e^{-W+V})  = - D^2 \log P_t e^{-U} \ge 0$ means that
$P_t$  {\it preserves log-concave functions}.
Thus we obtain

\begin{theorem}
Assume that $U$ is convex.
If $U_t = -\log P_t e^{-U}$ is a convex function for every $t \ge 0$, then every $T_t$ is a $1$-contraction.  

\end{theorem}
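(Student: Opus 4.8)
The plan is to reduce the contraction of $T_t$ to an expansion property of its inverse $S_t$ and then to establish this expansion by a monotonicity argument along the flow. Since $T_t = S_t^{-1}$, at a point $y = S_t(x)$ one has $DT_t(y) = (DS_t(x))^{-1}$, so $T_t$ is $1$-Lipschitz precisely when every singular value of $DS_t$ is at least $1$, i.e. when $(DS_t)^* DS_t \ge \mbox{Id}$. It therefore suffices to prove this matrix inequality for every $t \ge 0$; note that no passage to the limit $t \to \infty$ is needed, since we argue at fixed finite $t$.

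First I would differentiate the Lagrangian flow equation (\ref{Lagr}). Writing $U_t = -\log P_t e^{-U}$, the velocity field is $-\nabla \log P_t(e^{-U}) = \nabla U_t$, so differentiating in the space variable gives, for each fixed $x$, the linear matrix ODE
$$
\frac{d}{dt} DS_t(x) = D^2 U_t(S_t(x)) \cdot DS_t(x), \qquad DS_0 = \mbox{Id},
$$
in agreement with the relation $\frac{d}{dt} DS_t = -DW_t(S_t)\,DS_t$ recorded above, since $DW_t = -D^2 U_t$. Next I would set $M_t := (DS_t)^* DS_t$ and, using that $D^2 U_t$ is symmetric, compute
$$
\frac{d}{dt} M_t = 2\,(DS_t)^* D^2 U_t(S_t)\, DS_t.
$$

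The key step is to invoke the hypothesis that $U_t$ is convex for every $t \ge 0$, so that $D^2 U_t \ge 0$ and hence $\frac{d}{dt} M_t \ge 0$ as a symmetric matrix. Testing against an arbitrary fixed vector $\xi$, the scalar $\phi(t) := \langle M_t \xi, \xi \rangle = |DS_t \xi|^2$ satisfies $\phi'(t) \ge 0$, whence $\phi(t) \ge \phi(0) = |\xi|^2$; as $\xi$ is arbitrary this is exactly $M_t \ge M_0 = \mbox{Id}$. By the reduction of the first paragraph this gives $\|DT_t\| \le 1$, so each $T_t$ is a contraction.

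The main obstacle is not the algebra but the regularity and well-posedness underpinning it. One must know that $P_t e^{-U} > 0$ (so that $U_t = -\log P_t e^{-U}$ is defined), that $U_t$ is smooth enough for $D^2 U_t$ to be meaningful and for the velocity field to be regular, and that the flow (\ref{Lagr}) generates a genuine family of diffeomorphisms with differentiable Jacobian $DS_t$ so that differentiation under the flow is legitimate. The smoothing of the diffusion semigroup $P_t$ is precisely what one leans on here. Granting these regularity facts, which hold in the favorable situations where the semigroup and the potentials are sufficiently smooth, the monotonicity argument closes the proof.
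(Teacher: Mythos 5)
Your proposal is correct and follows essentially the same route as the paper: reduce the contraction of $T_t$ to the expansion property $(DS_t)^* DS_t \ge \mbox{Id}$, differentiate the Lagrangian flow to get $\frac{d}{dt}(DS_t)^* DS_t = 2 (DS_t)^* D^2 U_t(S_t) DS_t \ge 0$ from the convexity of $U_t$, and integrate from $M_0 = \mbox{Id}$. Your version is slightly cleaner on the signs (the paper's $DW_t = -D^2\log P_t e^{-U}$ is written inconsistently with its own definition of $W_t$) and is more explicit about the regularity hypotheses being swept under the rug, but the argument is the same.
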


It should be noted that by a resulf from \cite{Kol2001} the property to preserve {\it all}
log-concave fuinctions do admit only diffusion semigroups with Gaussian kernels.
Nevertheless, Kim and Milman were able to show under certain symmetry assumptions
log-concavity is preserved. The proof is based on the application of the maximum principle.

They get, in particularly, the
following result (see \cite{KimMilman} for a more general statement).

\begin{theorem}
Assume that $\mu$ is a product mesure, $V$ and $U$ are convex functions, $U$ is unconditional 
$U(x_1, \cdots, x_n) = U(\pm x_1, \cdots, \pm x_n)$,
and $V(x) = \sum_{i=1}^d \rho_i(|x_i|)$
with $\rho_i''' \le 0$.

Then $T$ is a contraction.  In addition, the optimal transportation mapping $T_{opt}$ pushing forward $\mu$ onto $\nu$ is a contraction too.
\end{theorem}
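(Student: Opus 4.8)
The plan is to deduce both contraction statements from a single structural fact — that the diffusion semigroup $P_t$ preserves log-concavity under the stated symmetry assumptions — together with the theorem already proved above. Since $\nu = e^{-U}\mu$ with $U$ convex and unconditional, and $\mu = e^{-V}\,dx$ with $V(x)=\sum_i\rho_i(|x_i|)$, the first assertion follows from the preceding theorem as soon as I show that $U_t=-\log P_t e^{-U}$ is convex for every $t\ge 0$: then each $T_t=S_t^{-1}$ is a $1$-contraction, and the pointwise limit $T=\lim_t T_t$ is a $1$-contraction as well. So the whole of the first part reduces to propagating log-concavity of $f_t:=P_t e^{-U}$ along the flow.

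First I would record the evolution equation for $g:=\log f_t$. Since $\partial_t f_t = Lf_t$ with $L=\Delta-\langle\nabla V,\nabla\rangle$ and $\nabla f=f\nabla g$, a direct computation gives $\partial_t g = Lg + |\nabla g|^2$. I then differentiate twice along a fixed unit vector $e$ and set $u:=g_{ee}$. Commuting $\partial_e^2$ with $L$ and with $|\nabla g|^2$ yields
\[
\partial_t u = Lu + 2\langle\nabla g,\nabla u\rangle + 2|\nabla g_e|^2 - \langle\nabla V_{ee},\nabla g\rangle - 2\langle\nabla V_e,\nabla g_e\rangle,
\]
where the first two terms are the transport/diffusion part, $2|\nabla g_e|^2$ is the favourable Bochner term, and the last two are the contribution of the drift $V$.

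The heart of the argument is the maximum principle. At $t=0$ we have $g=-U$, which is concave, so $\max_{x,\,|e|=1} g_{ee}\le 0$, and I want this to persist. Consider the first time $t_0$ and a point $x_0$ and unit vector $e$ at which the top eigenvalue of $D^2 g$ reaches $0$, so that $u(x_0)=g_{ee}(x_0)=0$ while $D^2 g(\cdot,t_0)\le 0$ still holds everywhere. There $\nabla u=0$ (whence $Lu=\Delta u\le 0$), and since $e$ is a null eigenvector of $D^2g$ one has $\nabla g_e = D^2 g\cdot e = 0$, which annihilates both $2|\nabla g_e|^2$ and $2\langle\nabla V_e,\nabla g_e\rangle$. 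Hence $\partial_t u\le -\langle\nabla V_{ee},\nabla g\rangle$, and it remains to show $\langle\nabla V_{ee},\nabla g\rangle\ge 0$. Here the product structure is decisive: $\partial_{jk}V=0$ for $j\ne k$, so for $e=(a_1,\dots,a_d)$ one gets $V_{ee}=\sum_k a_k^2\rho_k''(|x_k|)$ and therefore $\langle\nabla V_{ee},\nabla g\rangle=\sum_k a_k^2\,\rho_k'''(|x_k|)\,\mathrm{sgn}(x_k)\,\partial_k g$. Each factor has a controlled sign: $a_k^2\ge 0$; $\rho_k'''\le 0$ by hypothesis; and $\mathrm{sgn}(x_k)\,\partial_k g\le 0$, because $g$ is simultaneously even in $x_k$ (the reflections $x_k\mapsto-x_k$ commute with $L$, so $P_t$ preserves unconditionality of $e^{-U}$) and concave in $x_k$ at time $t_0$. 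Thus $\langle\nabla V_{ee},\nabla g\rangle\ge 0$, so $\partial_t u\le 0$ and concavity of $g$ is preserved for all $t$. Making this rigorous demands the usual care — existence of the spatial maximum (Gaussian-type decay, approximation), $C^2$-regularity of $P_t e^{-U}$, and an $\varepsilon$-regularization to legitimize ``the first time the maximum vanishes'' — which I regard as routine but not entirely trivial.

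For the second assertion I would adapt Caffarelli's maximum-principle computation (the sketch of Theorem \ref{contr1}) directly to $T_{opt}=\nabla\Phi$ pushing $\mu=e^{-V}\,dx$ onto $\nu=e^{-(U+V)}\,dx$. Evaluating that identity at a maximizer $x_0$ of $\Phi_{ee}$ (where $\nabla\Phi_{ee}=0$, $D^2\Phi_{ee}\le 0$, and $e$ is an eigenvector of $D^2\Phi$) and using $D^2 U\ge 0$ together with the diagonal form of $D^2 V$, I obtain $V_{ee}(x_0)\ge \Phi_{ee}^2(x_0)\,\langle D^2 V(\nabla\Phi(x_0))e,e\rangle = \Phi_{ee}^2(x_0)\,V_{ee}(\nabla\Phi(x_0))$. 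To conclude $\Phi_{ee}\le 1$ it then suffices to show $V_{ee}(x_0)\le V_{ee}(\nabla\Phi(x_0))$; since $\rho_k'''\le 0$ makes each $\rho_k''$ nonincreasing, this reduces to the coordinate-wise non-expansion $|(\nabla\Phi(x_0))_k|\le |x_{0,k}|$ on the coordinates entering $e$. I expect this comparison to be the main obstacle: it does not follow from the pointwise maximum conditions, and the naive estimate is self-referential (assuming $\|D^2\Phi\|\le 1$ would yield it immediately). Breaking the circularity requires exploiting the unconditionality of $\Phi$ — so that $\nabla\Phi$ fixes the origin and preserves each coordinate half-space — together with the global geometry of the unconditional optimal map; this is precisely the delicate point treated in \cite{KimMilman}.
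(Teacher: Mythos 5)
Your treatment of the first assertion is correct and follows essentially the same route as the paper's sketch: you evolve $g=\log P_t e^{-U}=-U_t$, derive the parabolic equation for $g_{ee}$, and run a maximum principle at the first time the top eigenvalue of $D^2g$ touches zero, where the good terms $2|\nabla g_e|^2$ and $2\langle\nabla V_e,\nabla g_e\rangle$ drop out because $e$ is a null eigenvector, leaving only $-\langle\nabla V_{ee},\nabla g\rangle$. This is exactly the identity $(d/dt-\Delta)\partial_{ee}U_t|_{t_0,x_0}=-\langle\nabla U_t,\nabla V_{ee}\rangle$ stated in the paper, and your sign analysis — $\partial_j V_{ee}=a_j^2\rho_j'''(|x_j|)\,\mathrm{sgn}(x_j)$ with $\rho_j'''\le 0$, combined with $\mathrm{sgn}(x_j)\partial_j g\le 0$ from unconditionality (preserved by $P_t$ since $L$ commutes with coordinate reflections) and the still-valid concavity of $g$ at time $t_0$ — is precisely the reason the paper asserts the right-hand side is non-negative. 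You actually supply more detail than the survey does, and you correctly flag the regularization issues as routine but nontrivial.

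The second assertion, however, is a genuine gap in your proposal, and you say so yourself. You reduce the Caffarelli-type maximum-principle computation for $T_{opt}=\nabla\Phi$ to the comparison $V_{ee}(x_0)\le V_{ee}(\nabla\Phi(x_0))$, observe correctly that $\rho_k'''\le 0$ turns this into the coordinate-wise bound $|(\nabla\Phi(x_0))_k|\le|x_{0,k}|$, and then note that this is circular if derived from the Lipschitz bound you are trying to prove. That is indeed the crux, and invoking unconditionality of $\Phi$ ``together with the global geometry of the unconditional optimal map'' is a statement of intent, not an argument; as written, the second half of the theorem is not proved. To be fair, the paper itself offers no proof of this part either — it only sketches the heat-flow statement and cites \cite{KimMilman} for the rest — so there is nothing in the source to compare your attempt against; but measured against the statement, your proposal establishes only the first claim. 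If you want to close the gap, the place to look is the argument in \cite{KimMilman} that handles the Brenier map separately (it does not follow formally from the contractivity of the heat-flow map, since two different transport maps between the same pair of measures need not share Lipschitz constants).
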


Let us very briefly explain the idea of the proof. Let $t_0$ be the first moment when the convexity of $U_t$ fails.
Assume that the minimum  of $\partial_{ee} U_{t_0}$ is attained at some point $x_0$ for some direction $e$.
Then $(d/dt - \Delta) \partial_{ee} U_{t} |_{t_0, x_0} \le 0$. In addition, $\nabla \partial_e U_t =0$ and $\nabla \partial_{ee} U_t =0$.
Using this one can show that
$$(d/dt - \Delta) \partial_{ee} U_{t}|_{t_0, x_0} 
=
- \langle \nabla U_t, \nabla V_{ee}\rangle|_{t_0, x_0}.
$$
At the time $t_0$ the function $U_t$ is still convex and it is easy to see that the right-hand side schould be non-negative.
This leads to a contradiction.

\section{$L^p$-contractions}

In this section we discuss an $L^p$-generalization of the Caffarelli's theorem (see \cite{Kol2010}). 
The proof below is obtained  with the help of the so-called 
above-tangent formalism (see \cite{Kol2010}). The huge advantage of this approach is that no a priori regularity
of the function $\Phi$ is required. 
See \cite{Kol2010} for details and relations to the transportation inequalities.

\begin{remark}
The estimates obtined in this section can be considered as global dimenion-free Sobolev a priori estimates for the optimal transportation problem.
In particular, they can be generalized fo  infinite-dimensional measures.
\end{remark}

\begin{theorem}
\label{lp-est}
 Assume that $D^2 W \ge K \cdot \mbox{\rm Id}$. Then for every unit $e$, $p \ge 1$, one has
$$
K \| \Phi^2_{ee} \|_{L^{p}(\mu)} \le \| (V_{ee})_{+} \|_{L^{p}(\mu)},
$$
$$
K \| \Phi^2_{ee} \|_{L^{p}(\mu)} \le \frac{p+1}{2}  \| V^2_{e} \|_{L^{p}(\mu)}.
$$
\end{theorem}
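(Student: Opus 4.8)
The plan is to replace the pointwise maximum-principle argument of Theorem~\ref{contr1} by an integrated, weighted version of the same second-order relation, which is what makes the estimate dimension-free and insensitive to regularity. Writing $d\mu = e^{-V}\,dx$, I would start from the identity already derived there,
$$
V_{ee} = \langle D^2W(\nabla\Phi)D^2\Phi\cdot e, D^2\Phi\cdot e\rangle + \langle\nabla W(\nabla\Phi),\nabla\Phi_{ee}\rangle - \mathrm{Tr}\,(D^2\Phi)^{-1}D^2\Phi_{ee} + \mathrm{Tr}\bigl[(D^2\Phi)^{-1}D^2\Phi_e\bigr]^2,
$$
multiply it by a weight $\beta(\Phi_{ee})$ with $\beta\ge0$, $\beta'\ge0$, and integrate against $\mu$. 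The above-tangent formalism of \cite{Kol2010} is what lets one carry out this pairing and the ensuing integrations by parts without assuming $\Phi\in C^2$.

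The crux is the linearised term. Using $e^{-V}=e^{-W(\nabla\Phi)}\det D^2\Phi$ and the fact that the cofactor matrix $a^{ij}$ of $D^2\Phi$ is divergence free (Piola's identity, $\sum_i\partial_i a^{ij}=0$), an integration by parts gives
$$
\int \mathrm{Tr}\,(D^2\Phi)^{-1}D^2\Phi_{ee}\,\beta\,d\mu = \int\langle\nabla W(\nabla\Phi),\nabla\Phi_{ee}\rangle\beta\,d\mu - \int\beta'(\Phi_{ee})\langle(D^2\Phi)^{-1}\nabla\Phi_{ee},\nabla\Phi_{ee}\rangle\,d\mu.
$$
Substituting this back into the integrated relation, the indefinite first-order term $\int\langle\nabla W(\nabla\Phi),\nabla\Phi_{ee}\rangle\beta\,d\mu$ cancels exactly, and one is left with $\int V_{ee}\,\beta\,d\mu = (I)+(II)+(III)$, where $(I)=\int\langle D^2W(\nabla\Phi)D^2\Phi e,D^2\Phi e\rangle\beta\,d\mu$, $(II)=\int\mathrm{Tr}[(D^2\Phi)^{-1}D^2\Phi_e]^2\beta\,d\mu$ and $(III)=\int\beta'\langle(D^2\Phi)^{-1}\nabla\Phi_{ee},\nabla\Phi_{ee}\rangle\,d\mu$. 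Since $\beta,\beta'\ge0$, all three are nonnegative, and bounding $(I)\ge K\int\|D^2\Phi\cdot e\|^2\beta\,d\mu\ge K\int\Phi_{ee}^2\beta\,d\mu$ (using $D^2W\ge K$ and $\Phi_{ee}\le\|D^2\Phi\cdot e\|$) yields the master inequality
$$
K\int\Phi_{ee}^2\,\beta(\Phi_{ee})\,d\mu \le \int V_{ee}\,\beta(\Phi_{ee})\,d\mu .
$$

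For the first estimate I would take $\beta(s)=s^{2p-2}$ (legitimate since $\Phi$ is convex, so $\Phi_{ee}\ge0$, and $p\ge1$ makes $\beta$ nonnegative and nondecreasing), bound $V_{ee}\le(V_{ee})_+$, and apply Hölder with exponents $p$ and $p/(p-1)$; the exponent $(2p-2)\tfrac{p}{p-1}=2p$ is exactly what lets one divide by $\bigl(\int\Phi_{ee}^{2p}d\mu\bigr)^{(p-1)/p}$ and obtain $K\|\Phi_{ee}^2\|_{L^p(\mu)}\le\|(V_{ee})_+\|_{L^p(\mu)}$. For the second estimate I would instead integrate $\int V_{ee}\beta\,d\mu$ by parts once more along $e$, via $V_{ee}e^{-V}=\partial_e(V_e e^{-V})+V_e^2e^{-V}$, to get $\int V_{ee}\beta\,d\mu=\int V_e^2\beta\,d\mu-\int V_e\,\beta'(\Phi_{ee})\Phi_{eee}\,d\mu$.

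The main obstacle is the indefinite third-order term $\int V_e\beta'\Phi_{eee}\,d\mu$. I would tame it by writing $\Phi_{eee}=\langle\nabla\Phi_{ee},e\rangle$ and using Cauchy--Schwarz in the metric $D^2\Phi$ to get $|\Phi_{eee}|\le\langle(D^2\Phi)^{-1}\nabla\Phi_{ee},\nabla\Phi_{ee}\rangle^{1/2}\Phi_{ee}^{1/2}$; a further Cauchy--Schwarz and Young's inequality then absorb this term into $(III)$ at the cost of $\tfrac14\int V_e^2\beta'\Phi_{ee}\,d\mu$. With $\beta(s)=s^{2p-2}$ one has $\beta'\Phi_{ee}=(2p-2)\beta$, so this cost combines with $\int V_e^2\beta\,d\mu$ to give $\tfrac{p+1}{2}\int V_e^2\beta\,d\mu$, and the Young parameter equal to $1$ is precisely the choice that both keeps the leftover multiple of $(III)$ nonnegative and produces the sharp constant $\tfrac{p+1}{2}$; a final Hölder step gives $K\|\Phi_{ee}^2\|_{L^p(\mu)}\le\tfrac{p+1}{2}\|V_e^2\|_{L^p(\mu)}$. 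Apart from this absorption, the delicate point throughout is the rigorous justification of the two integrations by parts and the vanishing of the boundary terms at infinity, which is exactly the role played by the above-tangent formalism when $\Phi$ lacks classical second derivatives.
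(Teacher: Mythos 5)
Your argument is correct and, at its core, is the paper's argument: you arrive at exactly the key inequality (\ref{lp-sd+}) (your master inequality with $\beta(s)=s^{2p-2}$), and your two endgames --- H{\"o}lder with exponents $p$ and $p/(p-1)$ for the first bound; the extra integration by parts along $e$, the bound $|\Phi_{eee}|\le\langle(D^2\Phi)^{-1}\nabla\Phi_{ee},\nabla\Phi_{ee}\rangle^{1/2}\Phi_{ee}^{1/2}$, and the absorption of the third-order term into the Dirichlet-type term --- coincide step for step with the paper's, including the constant $\frac{p+1}{2}$ (the paper writes it as $\frac{p'+4}{4}$ for the weight exponent $p'=2p-2$). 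The one genuine difference is how (\ref{lp-sd+}) is derived. You differentiate the Monge--Amp{\`e}re equation twice pointwise and integrate by parts using the divergence-free cofactor matrix; this presupposes third derivatives of $\Phi$ and is therefore only formal, and appealing to the above-tangent formalism to legitimize it is somewhat circular, because that formalism \emph{is} the alternative derivation rather than a justification of the pointwise one. The paper instead works throughout with the incremental quotients $\delta_{te}\Phi$, McCann's almost-everywhere change of variables, the elementary inequality $\mathrm{Tr}\,A-d-\log\det A\ge 0$, and a distributional divergence lemma, passing to the limit $t\to 0$ only at the very end; this is precisely what delivers the advertised feature that no a priori regularity of $\Phi$ is needed. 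So your proof is the smooth-case version of the paper's; to make it rigorous you would have to either invoke Caffarelli regularity under extra hypotheses on $V,W$ and then approximate, or replace your two integrations by parts by their finite-difference counterparts as the paper does.
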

\begin{proof}
 Fix unit vector $e$. According to a result of McCann \cite{McCann2} the change of variables formula
$$
V(x) = W(\nabla \Phi(x)) - \log \det D^2_a \Phi
$$
holds $\mu$-almost everywhere 
Here $ D^2_a \Phi$ is the absolutely continuous part of the second distributional 
derivative $D^2 \Phi$ (Alexandrov derivative). 
One has
$$
V(x+ te) -  V(x)  = 
 W(\nabla \Phi(x+te))-  W(\nabla \Phi(x))  - \log \Bigl[ ({\det}_a D^2 \Phi(x))^{-1}  \cdot {\det}_a D^2\Phi(x+te)\Bigr].
$$
By the uniform convexity of $W$
\begin{align*}
V(x+ te) -  V(x)  & \ge 
\langle \nabla \Phi(x+te)- \nabla \Phi(x), \nabla W(\nabla \Phi(x))  \rangle \\& + \frac{K}{2} |\nabla \Phi(x+te)- \nabla \Phi(x)|^2
 - \log \Bigl[ ({\det}_a D^2 \Phi(x))^{-1}  \cdot {\det}_a D^2\Phi(x+te)\Bigr].
\end{align*}
Multiply this identity by $(\delta_{te} \Phi)^p $, where $p \ge 0$ and
$$
\delta_{te} \Phi =  \Phi(x+te) + \Phi(x-te) -2 \Phi(x)
$$
and integrate over $\mu$.
We apply the following simple lemma.
\begin{lemma}
Let $\varphi: A \to \R$, $\psi: B \to \R$ be convex functions on convex sets $A$, $B$. 
Assume that $\nabla \psi(B) \subset A$.
Then
$$
\mbox{\rm div} (\nabla \varphi \circ \nabla \psi)
\ge \mbox{\rm Tr}\bigl[ D^2_a \varphi (\nabla \psi) \cdot D^2_a \psi \bigr] \ dx \ge 0,
$$
where $\mbox{\rm div}$ is the distributional derivative.
\end{lemma}
Integrating by parts and applying this lemma 
we get
\begin{align*}
 \int \langle   \nabla \Phi(x+te)  & - \nabla \Phi(x), \nabla W(\nabla \Phi(x))  \rangle 
(\delta_{te} \Phi)^p 
\ d\mu \\& =
\int \langle  \nabla \Phi(x+te) \circ (\nabla \Psi)- x, \nabla W(x)  \rangle 
(\delta_{te} \Phi)^p  \circ (\nabla \Psi)
\ d\nu
\\&
\ge
\int \Bigl( \mbox{Tr} \bigl[ D^2 _a\Phi(x+te) \cdot (D^2_a \Phi)^{-1} \bigr] \circ  (\nabla \Psi)  - d \Bigr) (\delta_{te} \Phi)^p  \circ (\nabla \Psi) \ d\nu
\\&
+ 
p \int \Big\langle \nabla \Phi(x+te) \circ (\nabla \Psi)- x, (D^2 \Psi)   \nabla \delta_{te} \Phi  \circ (\nabla \Psi)  \Big\rangle (\delta_{te} \Phi)^{p-1}\circ (\nabla \Psi)  \ d\nu.
\end{align*}
We note that
$$\mbox{Tr} A - d - \log \det A \ge 0$$
for any $A$ of the type $A=BC$, where $B$ and $C$
are symmetric and positive.
Indeed, 
$$
\mbox{Tr} A - d - \log \det A
=\mbox{Tr} C^{1/2} B C^{1/2} - d - \log \det C^{1/2} B C^{1/2}
= \sum_i \lambda_i - 1 - \log  \lambda_i,
$$
where $\lambda_i$ are eigenvalues of $C^{1/2} B C^{1/2}$.

Consequently
\begin{align*}
\int \bigl( V(x+ te) - & V(x) \bigr) (\delta_{te} \Phi)^p d\mu 
\ge
\frac{K}{2} \int |\nabla \Phi(x+te)- \nabla \Phi(x)|^2 (\delta_{te} \Phi)^p \ d\mu \\&
+ p \int \Big\langle \nabla \Phi(x+te) - \nabla \Phi(x), (D^2 \Psi) \circ \nabla \Phi(x)   \nabla \delta_{te} \Phi  \Big\rangle (\delta_{te} \Phi)^{p-1}  \ d\mu.
\end{align*}
Applying the same inequality to $-te$ and taking the sum we get
\begin{align*}
\int  & \bigl( V(x+ te) + V(x-te) - 2V(x) \bigr) (\delta_{te} \Phi)^p d\mu 
\\&
\ge
\frac{K}{2} \int |\nabla \Phi(x+te)- \nabla \Phi(x)|^2 (\delta_{te} \Phi)^p \ d\mu  + 
\frac{K}{2} \int |\nabla \Phi(x-te)- \nabla \Phi(x)|^2 (\delta_{te} \Phi)^p \ d\mu \\&
+ p \int \Big\langle \nabla \delta_{te} \Phi , (D^2_a \Phi)^{-1}    \nabla \delta_{te} \Phi  \Big\rangle (\delta_{te} \Phi)^{p-1}  \ d\mu.
\end{align*}
Note that the last term is non-negative.
Dividing by $t^{2p}$ and passing to the limit we obtain
\begin{equation}
\label{lp-sd+}
\int V_{ee}  \Phi_{ee}^p  \ d \mu 
\ge K
\int \| D^2 \Phi \cdot e\|^2   \Phi_{ee}^p  \ d \mu
+ p \int \langle (D^2 \Phi)^{-1}  \nabla \Phi_{ee}, \nabla \Phi_{ee} \rangle  \Phi_{ee}^{p-1} \ d \mu.
\end{equation}
For the proof of the first part we note that
$$
\int V_{ee} \Phi^p_{ee} \ d\mu 
\ge 
K \int  \Phi_{ee}^{p+2}  \ d \mu.
$$
Applying
the H{\"o}lder inequality
one gets
$$
 \| (V_{ee})_{+}\|_{L^{(p+2)/2}(\mu)}  \| \Phi_{ee}^p\|_{L^{(p+2)/p}(\mu)}
\ge \int V_{ee} \Phi^p_{ee} \ d\mu .
$$
This readily implies the result.

To prove the second part we integrate by parts the left-hand side
\begin{align*}
\int V_{ee} \Phi^{p}_{ee} \  d \mu &
=
-p \int V_e \Phi_{eee} \Phi^{p-1}_{ee} \ d\mu +  \int V^2_e \Phi^{p}_{ee} \  d \mu
\\&
= - p \int \langle \nabla \Phi_{ee}, V_e \cdot e \rangle \Phi^{p-1}_{ee} d\mu +  \int V^2_e \Phi^{p}_{ee} \  d \mu.
\end{align*}
By the Cauchy inequality the latter does not exceed
$$
 p \int \langle (D^2 \Phi)^{-1}  \nabla \Phi_{ee}, \nabla \Phi_{ee} \rangle  \Phi_{ee}^{p-1} \ d \mu
+ \frac{p}{4} \int V^2_e \langle D^2 \Phi e, e \rangle  \Phi_{ee}^{p-1} \ d \mu +  \int V^2_e \Phi^{p}_{ee} \  d \mu.
$$
Inequality (\ref{lp-sd+}) implies
$$
\frac{p+4}{4} \int V^2_e   \Phi_{ee}^{p} \ d \mu \ge 
 K \int |\nabla \Phi_e|^2 \Phi_{ee}^p  \ d \mu \ge K \int  \Phi_{ee}^{p+2}  \ d \mu.
$$
The rest of the proof is the same as in the first part.

\end{proof}

\begin{corollary}
In the limit  $p\to\infty$ we obtain  the contraction theorem of Caffarelli
$$
K \| \Phi_{ee} \|^2_{L^{\infty}(\mu)} \le \| (V_{ee})_{+} \|_{L^{\infty}(\mu)}.$$
\end{corollary}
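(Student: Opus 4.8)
The plan is to pass to the limit $p \to \infty$ in the first inequality of Theorem \ref{lp-est}, exploiting the fact that $\mu$ is a probability measure, so that its $L^p$-norms converge to the $L^\infty$-norm. First I would rewrite the left-hand side in a more convenient form. Since $\Phi$ is convex, $\Phi_{ee} = \langle D^2 \Phi \, e, e \rangle \ge 0$, whence $\Phi^2_{ee} = (\Phi_{ee})^2$ and
$$
\| \Phi^2_{ee} \|_{L^{p}(\mu)} = \Bigl( \int (\Phi_{ee})^{2p} \, d\mu \Bigr)^{1/p} = \| \Phi_{ee} \|^2_{L^{2p}(\mu)}.
$$
Thus the first estimate of Theorem \ref{lp-est} reads $K \| \Phi_{ee} \|^2_{L^{2p}(\mu)} \le \| (V_{ee})_{+} \|_{L^{p}(\mu)}$ for every $p \ge 1$.

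Next I would invoke the standard fact that on a finite measure space $\|f\|_{L^{q}} \to \|f\|_{L^{\infty}}$ as $q \to \infty$. Because $\mu$ is a probability measure, one has on the one hand $\|f\|_{L^{q}(\mu)} \le \|f\|_{L^{\infty}(\mu)}$ for every $q$, and on the other hand, for each $a < \|f\|_{L^{\infty}(\mu)}$ the set $\{|f| > a\}$ has positive $\mu$-measure, so that $\|f\|_{L^{q}(\mu)} \ge a \, \mu(\{|f|>a\})^{1/q} \to a$; letting $a \uparrow \|f\|_{L^\infty(\mu)}$ yields the convergence. Applying this to $f = \Phi_{ee}$ with exponent $2p$ and to $f = (V_{ee})_{+}$ with exponent $p$, and passing to the limit $p \to \infty$ in the displayed inequality, I obtain
$$
K \| \Phi_{ee} \|^2_{L^{\infty}(\mu)} \le \| (V_{ee})_{+} \|_{L^{\infty}(\mu)},
$$
which is the claimed statement.

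There is essentially no hard step here; the limit is routine once the identity $\| \Phi^2_{ee} \|_{L^p} = \| \Phi_{ee} \|^2_{L^{2p}}$ and the finiteness of $\mu$ are used. The only point requiring genuine care is that one must take the limit in the \emph{first} inequality of Theorem \ref{lp-est} rather than the second: the second carries a factor $\tfrac{p+1}{2}$ that diverges as $p \to \infty$ and so does not survive the passage to the limit. Only the estimate in terms of $(V_{ee})_{+}$ produces the sharp $L^\infty$ bound, thereby recovering the pointwise supremum estimate $\sup_{x} \Phi^2_{ee} \le \tfrac{1}{K} \sup_{x} V_{ee}$ of Theorem \ref{contr1}.
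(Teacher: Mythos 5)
Your argument is correct and is exactly the routine passage to the limit that the paper intends (the paper states the corollary without writing out a proof). The identity $\| \Phi^2_{ee} \|_{L^p(\mu)} = \| \Phi_{ee} \|^2_{L^{2p}(\mu)}$, the convergence of $L^q$-norms to the $L^\infty$-norm on a probability space, and the observation that one must use the first (not the second) inequality of the theorem are all the right points to make.
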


A more difficult estimate for the operator norm $\| \cdot \|$ has been also obtained in \cite{Kol2010}.

\begin{theorem}
 Assume that $D^2 W \ge K \cdot \mbox{\rm Id}$. Then for every $r \ge 1$ one has
$$
K 
\Bigl( \int  \| D^2 \Phi\|^{2r} \ d \mu \Bigr)^{\frac{1}{r}}
\le \Bigl(  \int  \| (D^2 V)_{+} \|^r  \ d \mu \Bigr)^{\frac{1}{r}}.
$$
\end{theorem}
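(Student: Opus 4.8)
The plan is to derive the operator-norm estimate from the directional $L^p$ bound of Theorem~\ref{lp-est}, upgraded to a frame that varies with the point. Since $D^2\Phi$ is symmetric and non-negative, its operator norm is its largest eigenvalue; let $e(x)$ be a measurable choice of unit eigenvector of $D^2\Phi(x)$ for this eigenvalue, so that pointwise $\|D^2\Phi(x)\| = \Phi_{e(x)e(x)}(x)$ and $\|D^2\Phi(x)\cdot e(x)\| = \|D^2\Phi(x)\|$. On the potential side, for every unit $w$ one has $V_{ww} = \langle D^2 V\, w, w\rangle \le \langle (D^2V)_+\, w, w\rangle \le \|(D^2V)_+\|$, and in particular $V_{e(x)e(x)}(x) \le \|(D^2V)_+(x)\|$.

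The target is the single integrated inequality
$$
\int \|(D^2V)_+\|\,\|D^2\Phi\|^{p}\, d\mu \ \ge\ K \int \|D^2\Phi\|^{p+2}\, d\mu, \qquad p := 2(r-1).
$$
Granting it, the theorem follows by H\"older's inequality exactly as in the first part of Theorem~\ref{lp-est}: applying H\"older with the conjugate exponents $r$ and $r/(r-1)$ to the left-hand side and using $p\cdot r/(r-1) = 2r$ gives
$$
\int \|(D^2V)_+\|\,\|D^2\Phi\|^{p}\, d\mu \le \Bigl(\int \|(D^2V)_+\|^{r}\, d\mu\Bigr)^{1/r}\Bigl(\int \|D^2\Phi\|^{2r}\, d\mu\Bigr)^{(r-1)/r},
$$
while the right-hand side equals $K\int \|D^2\Phi\|^{2r}\, d\mu$ because $p+2 = 2r$; dividing by the common factor $\bigl(\int\|D^2\Phi\|^{2r}d\mu\bigr)^{(r-1)/r}$ yields the assertion. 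Degenerate cases (where this factor vanishes or is infinite) are dealt with by the same approximation used in the proof of Theorem~\ref{lp-est}.

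To obtain the displayed target I would re-run the above-tangent computation that produced (\ref{lp-sd+}), but with the fixed direction $e$ replaced by the eigenvector field $e(x)$. In the limit $t\to 0$ this should yield the weighted inequality $\int V_{ee}\,\Phi_{ee}^{p}\, d\mu \ge K\int \|D^2\Phi\cdot e\|^{2}\,\Phi_{ee}^{p}\, d\mu$ together with a non-negative gradient remainder of the same shape as in (\ref{lp-sd+}), which is discarded. Because $e(x)$ is now an eigenvector one has the exact identities $\Phi_{ee} = \|D^2\Phi\|$ and $\|D^2\Phi\cdot e\|^{2} = \|D^2\Phi\|^{2}$, and together with $V_{ee}\le\|(D^2V)_+\|$ this is precisely the target inequality.

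The hard part is the passage to the moving frame. In the $L^\infty$ (maximum-principle) proof this difficulty is invisible: at a point that maximizes $\Phi_{ee}$ jointly in $x$ and $e$ the extremal direction is automatically an eigenvector of $D^2\Phi$, exactly as exploited in Theorem~\ref{contr2}. For an $L^r$ estimate there is no single extremal point, the direction $e(x)$ genuinely varies, and $\lambda_{\max}(D^2\Phi(x)) = \|D^2\Phi(x)\|$ need not be differentiable where eigenvalues cross, so one cannot naively differentiate the second difference along $e(x)$. I would circumvent this by working throughout with the scalar envelope
$$
\Delta_t\Phi(x) := \sup_{|w|=1}\bigl(\Phi(x+tw)+\Phi(x-tw)-2\Phi(x)\bigr),
$$
which is a supremum of the smooth quantities already controlled in Theorem~\ref{lp-est} and satisfies $\Delta_t\Phi(x)/t^{2}\to\|D^2\Phi(x)\|$. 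The virtue of the above-tangent formalism is that it never requires pointwise second derivatives of $\Phi$, so it should accommodate the envelope directly and absorb the frame-variation contributions into the discarded non-negative remainder. Verifying that this substitution preserves the sign of all the remainder terms --- i.e.\ that replacing the fixed-direction second difference by its envelope does not destroy the inequality after integration by parts and the application of the divergence lemma --- is the delicate point on which the argument ultimately rests.
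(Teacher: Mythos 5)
Your reduction of the theorem to the single integrated inequality $\int \|(D^2V)_+\|\,\|D^2\Phi\|^{p}\,d\mu \ge K\int\|D^2\Phi\|^{p+2}\,d\mu$ with $p=2(r-1)$ is correct, and the H\"older step that converts it into the stated $L^r$ bound is exactly the mechanism used in the first part of Theorem~\ref{lp-est}. You have also correctly diagnosed where the real difficulty lies: the directional estimate controls $\sup_e\int\Phi_{ee}^{2r}\,d\mu$, whereas the operator norm requires $\int\sup_e\Phi_{ee}^{2r}\,d\mu$, so a moving eigenframe is unavoidable. But the proposal stops precisely at the point where the proof has to be carried out. Note that the paper itself gives no proof of this theorem --- it defers to \cite{Kol2010} and explicitly calls the operator-norm estimate ``more difficult'' --- so the obstacle you flag as ``the delicate point on which the argument ultimately rests'' is not a formality to be checked but the entire content of the result.

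Concretely, the step that does not survive the substitution $e\mapsto e(x)$ is not the one you focus on (the differentiability of $\lambda_{\max}$ and the sign of the discarded gradient remainder), but the divergence lemma. In the proof of Theorem~\ref{lp-est} the term $\int\langle\nabla\Phi(x+te)-\nabla\Phi(x),\nabla W(\nabla\Phi(x))\rangle(\delta_{te}\Phi)^p\,d\mu$ is pushed forward to $\nu$ and integrated by parts, and the resulting divergence is bounded below via the lemma applied to $\nabla\varphi\circ\nabla\Psi$ with $\varphi(y)=\Phi(y+te)$; this requires $\varphi$ to be convex with gradient $\nabla\Phi(y+te)$, which holds for a constant direction $e$ and fails for a direction field $e(y)$ (the map $y\mapsto\nabla\Phi(y+te(y))$ is no longer the gradient of a convex function, and the trace lower bound $\mbox{\rm Tr}\bigl[D^2_a\Phi(x+te)\cdot(D^2_a\Phi)^{-1}\bigr]-d-\log\det(\cdots)\ge 0$ acquires uncontrolled terms involving $De(y)$). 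Passing to the envelope $\Delta_t\Phi$ does not repair this, because the envelope only replaces the weight $(\delta_{te}\Phi)^p$ and its gradient (where a Danskin-type argument indeed helps), not the vector field being differentiated inside the divergence. Until you produce a substitute for this step --- or an entirely different route to the target inequality --- the argument is a plausible plan rather than a proof.
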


\section{Contractions for infinite measures}

In this section we investigate contractions of infinite measures.
Let us stress that unlike the probability case we don't have a natural probabilistic normalization of the total volume.

We start with the following $1$-dimensional example 

\begin{example}
Let $d=1$ and  $\mu= \lambda|_{\R^+}$, $\nu = I_{[0,+\infty)} \rho \ dx$
and $\rho \ge 1$. Then the standard monotone transportation $T$ is a contraction.
\end{example}
\begin{proof}
Indeed, this follows immediately from the explicit representation of  $T$
$$
\int_{0}^{T} \rho \ dx = x.
$$
\end{proof}

Let us investigate what happens for $d=2$ if the image measure is rotationally symmetric.

\begin{example} {\bf (F. Morgan)}
Let $d=2$ and  $\mu= \lambda$, $\nu = \Psi(r) \ dx$.
A natural transport mapping has the form
$$
T(x) = \varphi(r) \cdot n, \ \ n = \frac{x}{r}
$$
Clearly
$$
\nu(T(B_r)) = 2 \pi \int_{0}^{\varphi(r)} \ s \Psi(s) \ dr= \pi r^2
= \mu(B_r) .
$$
Let us compute $DT$ in the frame $(n,v)$, where $v = \frac{(-x_2,x_1)}{r}$.
One has
$$
\partial_n T = \varphi' \cdot n  \  \ \
\partial_v T = \frac{\varphi}{r} \cdot v. 
$$
Clearly, a necessary and sufficient condition for $T$ to be a contraction is the following:
$$
\varphi' \le 1
$$
or $\psi' \ge 1$ for $\psi = \varphi^{-1}$.
From the change of variables formula we obtain
$$
\psi(r) = \sqrt{2 \int_0^r s \Psi(s)  \ ds}.
$$
Condition $\psi' \ge 1$ is equivalent to
$
\int_0^r s \Psi (s) \ ds \le \frac{(r \Psi(r))^2}{2}.
$
The latter holds, for instance, if 
$$
(s \Psi (s))' \ge 1.
$$
Indeed, in this case
$$
\int_0^r s \Psi (s) \ ds \le \int_0^r s \Psi (s) (s \Psi (s))' \ ds =\frac{(r \Psi(r))^2}{2}.
$$
\end{example}

\begin{example}
Similarly in dimension $d$, a sufficient condition for the 
transportation mapping $T = \varphi(r) \frac{x}{r}$ between $\lambda$ and $\Psi(r) \ dx$
to be a contraction
n is that
$$
 (r  \Psi^{\frac{1}{d-1}}(r))' \ge  1.
$$
\end{example}

\begin{corollary}
In $d$-dimensional Euclidean space with 
density $\Psi(r)$ satisfying $ (r  \Psi^{\frac{1}{d-1}}(r))' \ge  1$, 
the Euclidean isoperimetric inequality holds.
\end{corollary}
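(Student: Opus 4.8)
The plan is to combine the radial contraction produced in the preceding Example with the general transference principle for contractions stated in the Introduction; no new computation is really needed, only careful bookkeeping.

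First I would record what the hypothesis buys us. Under $(r\Psi^{1/(d-1)}(r))' \ge 1$ the preceding Example exhibits the radial map $T = \varphi(r)\frac{x}{r}$ which pushes the Lebesgue measure $\lambda$ forward onto $\nu = \Psi(r)\,dx$ and is $1$-Lipschitz, i.e. a contraction of the Euclidean space into itself. Thus we are exactly in the situation $X = Y = \R^d$, $\mu = \lambda$, $\nu = \lambda \circ T^{-1}$, to which the contraction principle applies, yielding $\mathcal{I}_\nu \ge \mathcal{I}_\lambda$.

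If I wanted the argument to be self-contained I would reprove this comparison in one line: for any Borel $A \subset \R^d$ the contraction property gives $T^{-1}(A_h) \supseteq (T^{-1}(A))_h$, so $\nu(A_h) = \lambda(T^{-1}(A_h)) \ge \lambda((T^{-1}(A))_h)$; dividing by $h$, letting $h \to 0$, and using $\lambda(T^{-1}(A)) = \nu(A)$ gives $\nu^+(\partial A) \ge \lambda^+(\partial T^{-1}(A)) \ge \mathcal{I}_\lambda(\nu(A))$, and taking the infimum over all $A$ with $\nu(A)=t$ yields $\mathcal{I}_\nu(t) \ge \mathcal{I}_\lambda(t)$. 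The point of the present section is precisely that this passage uses nothing about finiteness of the total mass, so it remains valid for the infinite measure $\lambda$.

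Finally I would identify the right-hand side. By the classical Euclidean isoperimetric inequality the Lebesgue balls minimize perimeter among sets of prescribed volume, so $\mathcal{I}_\lambda(t) = d\,\omega_d^{1/d}\,t^{(d-1)/d}$, where $\omega_d = \lambda(B_1)$. Combining with the previous step gives $\mathcal{I}_\nu(t) \ge d\,\omega_d^{1/d}\,t^{(d-1)/d}$, which is exactly the assertion that the Euclidean isoperimetric inequality holds for $(\R^d, \nu)$. The only genuine mathematical input beyond the two stated facts is this classical inequality for $\lambda$ itself; the corollary merely transports it along $T$. The one place I would be careful is exactly the infinite-mass issue flagged at the start of the section: checking that both the definition of $\mathcal{I}$ and the neighborhood-pullback step above survive without a probabilistic normalization, which they do.
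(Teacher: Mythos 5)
Your argument is correct and is exactly the route the paper intends: the preceding Example supplies the radial contraction $T=\varphi(r)\frac{x}{r}$ pushing $\lambda$ onto $\Psi(r)\,dx$ under the hypothesis $(r\Psi^{1/(d-1)})'\ge 1$, and the contraction transference principle from the Introduction (whose one-line verification via $T^{-1}(A_h)\supseteq (T^{-1}(A))_h$ you correctly supply, and which indeed uses no normalization of total mass) gives $\mathcal{I}_\nu\ge\mathcal{I}_\lambda$, which is the Euclidean isoperimetric inequality for the density $\Psi$. The paper states the corollary without proof as an immediate consequence of these two facts, so your write-up simply makes explicit the same reasoning.
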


Some example of contraction mappings arise naturally  in differential geometry 
(see \cite{MaurMorg}, Propositions 1.1 and  2.1).

\begin{proposition}
Let $M$ be the plane equipped with the  metric $$dr^2 + g^2(r) r^2 d\theta^2$$ (surface of revolution), $g \ge 1$. Then
the identity mapping form $M$ to the Euqlidean plane with measure $g \ dx$
is a volume preserving contraction.

In particular, $\cosh^2(r) \ dx$
is a  Lipschitz image of $H^2$ (with metric $dr^2 + \cosh^2(r) d \theta^2$).
\end{proposition}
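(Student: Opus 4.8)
The plan is to work entirely in polar coordinates $(r,\theta)$, in which the asserted map is literally the identity $(r,\theta)\mapsto(r,\theta)$, and to verify the two claims—volume preservation and the contraction property—separately, the latter first at the infinitesimal level and then by integration.

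First I would dispose of volume preservation, which needs no hypothesis on $g$. On $M$ the metric is $\mathrm{diag}(1,\,g^2(r)r^2)$ in the frame $(\partial_r,\partial_\theta)$, so its Riemannian volume form is $\sqrt{\det}\;dr\,d\theta = g(r)\,r\,dr\,d\theta$. On the target the measure is $g\,dx$, and since the Euclidean area element in polar coordinates is $dx = r\,dr\,d\theta$, this equals $g(r)\,r\,dr\,d\theta$ as well. Because the identity moves no coordinates, it pushes $\mathrm{vol}_M$ forward to exactly $g\,dx$, which is the volume-preserving claim.

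Next I would establish the contraction property. Introduce the $M$-orthonormal frame $\{\partial_r,\ \frac{1}{g r}\partial_\theta\}$ and the Euclidean orthonormal frame $\{\partial_r,\ \frac{1}{r}\partial_\theta\}$, the latter being the frame $(n,v)$ already used in the examples above (here $n=\partial_r$ and $v=\frac1r\partial_\theta$). A unit tangent vector of $M$ has the form $w = a\,\partial_r + \frac{b}{g r}\partial_\theta$ with $a^2+b^2=1$, and its Euclidean squared length is $a^2 + b^2/g^2 \le a^2+b^2 = 1$ precisely because $g\ge 1$. Hence $|w|_{\mathrm{eucl}} \le |w|_M$ for every tangent vector, i.e. $\mathrm{Id}^{*}g_{\mathrm{eucl}} \le g_M$ as quadratic forms. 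The global bound then follows by the standard length argument: for any curve $\gamma$ joining $p$ to $q$ in $M$ one has $\mathrm{length}_{\mathrm{eucl}}(\mathrm{Id}\circ\gamma) = \int |\dot\gamma|_{\mathrm{eucl}}\,dt \le \int |\dot\gamma|_{M}\,dt = \mathrm{length}_{M}(\gamma)$, and taking the infimum over $\gamma$ gives $d_{\mathrm{eucl}}(p,q)\le d_{M}(p,q)$.

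The only genuine subtlety is this last passage from the pointwise comparison of norms to the distance inequality; the rest is a one-line determinant computation and a one-line eigenvalue comparison. For the hyperbolic specialization I would choose $g$ so that the $M$-metric becomes $dr^2+\cosh^2(r)\,d\theta^2$, i.e. $g(r)=\cosh(r)/r$; the hypothesis $g\ge 1$ then reduces to the elementary inequality $\cosh r\ge r$, valid for all $r\ge 0$, so the general statement applies and exhibits $H^2$ as a volume-preserving contraction image of the corresponding weighted Euclidean plane.
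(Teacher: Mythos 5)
Your argument for the general statement is correct, and it is the expected one: the paper itself offers no proof here, only a citation to Maurmann--Morgan, and the content is exactly your two observations --- the volume forms $g(r)\,r\,dr\,d\theta$ coincide, and $g\ge 1$ gives the pointwise comparison $dr^2+r^2d\theta^2\le dr^2+g^2r^2d\theta^2$ of quadratic forms, which integrates along curves to the distance inequality. That part needs no changes.

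The ``in particular'' clause, however, does not follow from your substitution as you claim. Setting $g^2(r)r^2=\cosh^2(r)$ forces $g(r)=\cosh(r)/r$, and the general statement then produces the plane with measure $g\,dx=\tfrac{\cosh r}{r}\,dx=\cosh(r)\,dr\,d\theta$, whereas the proposition asserts the measure $\cosh^2(r)\,dx=\cosh^2(r)\,r\,dr\,d\theta$. These are different measures, and your write-up asserts the specialization goes through without checking which density actually comes out; you have in effect proved a different (and arguably the correct) version of the corollary. Note also two smaller points: your final sentence reverses the direction of the map (the weighted plane is the Lipschitz image of $H^2$, not the other way around), and the surface $dr^2+\cosh^2(r)\,d\theta^2$ does not close up at $r=0$ (the circle $r=\varepsilon$ has length $2\pi\cosh\varepsilon\to 2\pi$), so it is not literally ``the plane'' of the general statement; if one wants the hyperbolic plane in geodesic polar coordinates one should take $dr^2+\sinh^2(r)\,d\theta^2$, i.e.\ $g=\sinh(r)/r\ge 1$, yielding the density $\sinh(r)/r$. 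So either the specialization should be restated with the density that your computation actually yields, or a different (non-identity) map must be produced for the density $\cosh^2(r)$; as written, this step is a genuine gap.
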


The following comparison result has been proved in \cite{KoZh}. It turns out that a natural model measure
for the one-dimensional log-convex distributions  has the following form:
$$
\nu_{A} = \frac{dx}{\cos Ax}, \ -\frac{\pi}{2A}  < x < \frac{\pi}{2A}.
$$
Its potential $V$  satisfies
$
V'' e^{-2V} =A^2.
$
Using a result \cite{RCBM} on symmetricity of the isoperimetric sets one can compute
the isoperimetric profile of $\nu_A$:
$$
\mathcal{I}_{\nu_A}(t) =  e^{At/2}+e^{-At/2}.
$$

\begin{proposition}
Let $\mu = e^{W} dx$ be a measure on $\R^1$ with even convex potential $W$.
Assume that
$$W''e^{-2W} \ge A^2,$$
and $W(0)=0$. Then $\mu$ is the image of $\nu_A$ under
a $1$-Lipschitz increasing mapping.
\end{proposition}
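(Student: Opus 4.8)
The plan is to realize the claimed mapping as the monotone (increasing) transport $T$ of $\nu_A$ onto $\mu$, anchored at the origin by symmetry, and to verify directly that $T'\le 1$ through a one-dimensional Sturm-type comparison. Since both $\nu_A$ and $\mu$ are even and of infinite mass, I would define $T$ on $[0,\pi/2A)$ by the balancing relation $\int_0^x e^{V}\,ds=\int_0^{T(x)}e^{W}\,du$, where $V=-\log\cos(Ax)$ is the potential of $\nu_A$ (so that $\nu_A=e^{V}\,dx$ and $V''e^{-2V}=A^2$, equivalently $V''=A^2e^{2V}$), and extend $T$ oddly. Both densities are smooth and positive, so $T$ is smooth, increasing, and pushes $\nu_A$ forward to $\mu$; differentiating the balancing relation gives the change-of-variables identity $e^{V(x)}=e^{W(T(x))}\,T'(x)$, i.e. $T'=e^{V-W\circ T}$. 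Hence the Lipschitz bound $T'\le 1$ is equivalent to $\delta:=W\circ T-V\ge 0$, and by oddness it suffices to prove this for $x\ge 0$.

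Next I would convert the two hypotheses into a differential inequality for $\delta$. Writing $R:=W\circ T$ and using $T'=e^{V-R}$, a direct computation gives
\[
R''=(W''\circ T)\,e^{2(V-R)}+R'(V'-R').
\]
The convexity hypothesis $W''e^{-2W}\ge A^2$, evaluated at the point $T(x)$, yields $W''\circ T\ge A^2 e^{2R}$, so the first term is at least $A^2 e^{2V}$; since the model potential satisfies $V''=A^2 e^{2V}$ exactly, this is precisely $V''$. Subtracting $V''$ and writing $V'-R'=-\delta'$, I obtain
\[
\delta''+R'\,\delta'\ge 0 .
\]

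The final step is an integrating-factor argument. Because $R(0)=W(T(0))=W(0)=0$, the integrating factor is exactly $e^{R}$: indeed $(e^{R}\delta')'=e^{R}(\delta''+R'\delta')\ge 0$. Evenness of $V$ and $W$ forces $V'(0)=W'(0)=0$ together with $T(0)=0$, so $\delta(0)=0$ and $\delta'(0)=0$; thus $e^{R}\delta'$ is nondecreasing on $[0,\pi/2A)$ and vanishes at $0$, whence $\delta'\ge 0$ and therefore $\delta\ge\delta(0)=0$ throughout. This gives $T'=e^{-\delta}\le 1$ on $[0,\pi/2A)$, and by oddness on the whole interval, which is the claim.

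The main obstacle is the computation leading to $\delta''+R'\delta'\ge 0$, where everything must line up: the genuinely delicate point is that the model ODE holds with equality ($V''=A^2 e^{2V}$) while the target only satisfies the inequality $W''e^{-2W}\ge A^2$, and it is exactly this matching that makes the obstruction term cancel and produces the clean integrating factor $e^{R}$ with no leftover terms. I would also be mildly careful about regularity (assuming $W\in C^2$, or otherwise approximating) and about the limit $x\to\pi/2A$, where $T(x)\to\infty$ because $\int_0^{\pi/2A}e^{V}=\int_0^{\infty}e^{W}=\infty$; but these are routine once the differential inequality is established.
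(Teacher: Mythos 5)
Your argument is correct and establishes the proposition, but it follows a genuinely different track from the paper's. The paper works with the inverse direction: it takes the monotone map $\varphi'$ sending $\mu$ to $\nu_A$, assumes (after approximation) the strict inequality $W''e^{-2W}>A^2$, and shows by a maximum-principle contradiction that $\varphi''$ can have no interior local maximum; combined with evenness of $\varphi''$ this forces $\varphi''\ge\varphi''(0)=1$, so the inverse map is the desired contraction. You instead build the forward map $T$ from $\nu_A$ to $\mu$ by mass balancing from the origin and control $\delta=W\circ T-V$ via the differential inequality $\delta''+R'\delta'\ge 0$ with $\delta(0)=\delta'(0)=0$, propagated by the integrating factor $e^{R}$. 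Your computation checks out: $R''=(W''\circ T)e^{2(V-R)}+R'(V'-R')$, and the hypothesis evaluated at $T(x)$ produces exactly the model term $V''=A^2e^{2V}$, so the comparison closes with no leftover terms. What your route buys is that it needs neither the strict-inequality approximation nor the discussion of whether an extremum of $\varphi''$ is actually attained (the paper's step ``no local maximum plus evenness implies global minimum at $0$'' is where that is hidden, and its displayed sign $\varphi^{(4)}(x_0)\ge 0$ at a local maximum is evidently a typo for $\le 0$); it also gives the monotonicity $\delta'\ge 0$ as a byproduct. The paper's version has the advantage of fitting the maximum-principle template used for the other contraction results in the survey. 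Your remaining loose ends --- smoothness of $T$ (which requires $W\in C^2$ to form $R''$, or an approximation) and the behaviour as $x\to\pi/(2A)$ --- are indeed routine once the differential inequality is in place.
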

\begin{proof}
Without loss of generality one can assume that $W$ is smooth and $W''e^{-2W} > A^2$.
Let $\varphi$ be a convex potential such that
$T=\varphi'$ sends $\mu$ to $\nu_A$.
In addition, we require that $T$ is antisymmetric.
Clearly, $\varphi'$ satisfies
$$
e^{W}  = \frac{\varphi''}{\cos A \varphi'}.
$$
Assume that $x_0$ is a local maximum point for $\varphi''$.
Then at this point
$$
\varphi^{(3)}(x_0)=0, \ \ \varphi^{(4)}(x_0) \ge 0.
$$
Differentiating the change of variables formula at $x_0$ twice we get
$$
W''
= \frac{\varphi^{(4)}}{\varphi''}
- \Bigl( \frac{\varphi^{(3)}}{\varphi''}\Bigr)^2 + \frac{A^2}{\cos^2 A \varphi'}
(\varphi'')^2 + A \frac{\sin A \varphi'}{\cos A \varphi'} \varphi^{'''}.
$$
Consequently one has at $x_0$
$$
W''
\le \frac{A^2}{\cos^2 A \varphi'} (\varphi'')^2 = A^2 e^{2W}.
$$
But this contradicts to the main assumption.

Hence $\varphi''$ has no local maximum. Note that $\varphi$ is even. This implies that that $0$
is the global minimum of $\varphi''$. Hence
$
\varphi'' \ge \varphi''(0)=1.
$
Clearly, $T^{-1}$ is the desired mapping.
\end{proof}

\section{Other results and applications}

An immediate consequence of the contraction theorem is the 
Bakry-Ledoux comparison theorem, which is a probabilistic analog of the L{\'e}vy-Gromov comparison theorem for Ricci positive manifolds. 

\begin{theorem}
Assume that $\mu =e^{-V} dx$, where $D^2 V \ge \mbox{\rm{Id}}$, is a probability measure on $\R^d$.
Then 
$$
\mathcal{I}_{\mu} \ge \mathcal{I}_{\gamma},
$$
where $\gamma$ is the standard  Gaussian measure.
\end{theorem}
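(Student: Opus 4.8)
The plan is to combine Caffarelli's contraction theorem (Theorem~\ref{contr1}) with the general contraction-to-isoperimetry principle recalled in the Introduction. The idea is simply to choose source and target correctly: take the standard Gaussian $\gamma$ as the source and the given measure $\mu = e^{-V}\,dx$ as the target. Since $D^2 V \ge \mbox{\rm Id}$, the hypotheses of Theorem~\ref{contr1} are met with $K=1$, so the optimal transport map $T=\nabla\Phi$ pushing $\gamma$ forward onto $\mu$ is a contraction. As $\mu = \gamma\circ T^{-1}$ is then the image of $\gamma$ under a $1$-Lipschitz map, the comparison $\mathcal{I}_\mu \ge \mathcal{I}_\gamma$ follows from the general principle, which is exactly the assertion.

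First I would spell out the general principle in the precise form needed. Fix a Borel set $B\subset\R^d$ with $\mu(B)=t$ and put $A=T^{-1}(B)$, so that $\gamma(A)=\mu(B)=t$. The crucial geometric observation is the inclusion $A_h\subset T^{-1}(B_h)$: if $x\in A_h$ there is $a\in A$ with $|x-a|\le h$, whence $T(a)\in B$ and $|T(x)-T(a)|\le|x-a|\le h$ by the contraction property, so $T(x)\in B_h$. Consequently $\gamma(A_h)\le\gamma\bigl(T^{-1}(B_h)\bigr)=\mu(B_h)$, and since $\gamma(A)=\mu(B)$,
$$
\mu(B_h)-\mu(B)\ge\gamma(A_h)-\gamma(A).
$$
Dividing by $h$ and taking the lower limit gives $\mu^+(\partial B)\ge\gamma^+(\partial A)\ge\mathcal{I}_\gamma(t)$, the last step being the definition of the Gaussian profile. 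Taking the infimum over all $B$ with $\mu(B)=t$ yields $\mathcal{I}_\mu(t)\ge\mathcal{I}_\gamma(t)$.

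All the analytic weight sits in Theorem~\ref{contr1}, which I am assuming. The main point requiring care is thus purely technical: the set-enlargement inclusion $A_h\subset T^{-1}(B_h)$ uses that $T$ is genuinely $1$-Lipschitz as a map defined on (almost) all of $\R^d$, not merely that $\|DT\|\le1$ at points of twice differentiability. This is supplied either by Caffarelli's regularity theory, under which $\nabla\Phi\in C^{1}$ and the pointwise bound integrates to a global Lipschitz estimate, or by the approximation scheme used in the incremental-quotients proof of Theorem~\ref{contr1}; once $T$ is a bona fide contraction the remaining steps are the elementary measure-theoretic manipulation above. I would therefore expect the only genuine obstacle to be invoking the correct regularity/approximation to make $T$ an honest $1$-Lipschitz map, after which the isoperimetric comparison is immediate.
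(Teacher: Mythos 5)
Your argument is correct and is exactly the route the paper intends: it states this theorem as ``an immediate consequence of the contraction theorem,'' i.e.\ Caffarelli's Theorem~\ref{contr1} applied with source $\gamma$ and target $\mu$ (so $K=1$), combined with the contraction-improves-isoperimetry principle recalled in the Introduction, whose measure-theoretic proof you correctly supply via the inclusion $A_h\subset T^{-1}(B_h)$. The only microscopic point to polish is that $x\in A_h$ means $\rho(x,A)\le h$ rather than the existence of $a\in A$ with $|x-a|\le h$, but passing through $B_{h+\varepsilon}$ for every $\varepsilon>0$ fixes this immediately, and your caveat about needing $T$ to be an honest globally $1$-Lipschitz map (not just $\|DT\|\le 1$ a.e.) is exactly the right technical point to flag.
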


In the same way the contraction theorem implies different functional and concentration inequalities for uniformly log-concave measures 
(log-Sobolev, Poincar{\'e} etc.). 

The following unsolved problem is known as the ''Gaussian correlation conjecture''.

{\bf Gaussian correlation conjecture.}
 Let $A$ and $B$ be symmetric convex sets and $\gamma$ be the standard Gaussian measure.
Then
\begin{equation}
 \label{gcc}
\gamma(A \cap B) \ge  \gamma(A) \gamma(B).
\end{equation}

The Gaussian correlation conjecture has quite a long history. This problem arose in 70th.  
 The positive solution is known for two-dimensional sets and for the case when one of the sets is an ellipsoid.
The ellipsoid case  was proved by G. Harg{\'e} \cite{Harge} by semigroup arguments.

\begin{theorem}
\label{ellipsoid}
 Let $B$ be an ellipsoid. Then (\ref{gcc}) holds.
\end{theorem}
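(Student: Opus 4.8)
\emph{Plan.} The idea is to derive the ellipsoid case directly from Caffarelli's contraction theorem in the form of Theorem \ref{contr2}, via a linear reduction to the ball combined with a conditioning trick. Write the ellipsoid as $B = \{x : \langle Mx, x\rangle \le 1\}$ with $M$ symmetric positive definite. Since (\ref{gcc}) is invariant under invertible linear maps — apply $y = M^{1/2}x$ simultaneously to both sets and push $\gamma$ forward — I would first reduce to the situation in which $B$ is the unit ball $B_1 = \{|y| \le 1\}$, the set $A$ is replaced by the still symmetric convex set $\tilde A = M^{1/2}A$, and the standard Gaussian is replaced by a (generally anisotropic) centered Gaussian $\tilde\gamma = e^{-Q}\,dx$. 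Thus it suffices to prove $\tilde\gamma(\tilde A \cap B_1) \ge \tilde\gamma(\tilde A)\,\tilde\gamma(B_1)$.

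Next I would condition $\tilde\gamma$ on the general symmetric convex set $\tilde A$. The restricted probability measure
$$
\nu = \frac{1}{\tilde\gamma(\tilde A)}\, \mathbf{1}_{\tilde A}\cdot \tilde\gamma = e^{-Q - P}\,dx, \qquad P = -\log \mathbf{1}_{\tilde A} + \mathrm{const},
$$
has the \emph{same} Gaussian potential $Q$ and an additional \emph{convex} term $P$ (the convex indicator of $\tilde A$, equal to $0$ on $\tilde A$ and $+\infty$ off it). Hence $\tilde\gamma$ and $\nu$ are exactly of the form required by Theorem \ref{contr2}, so the optimal transport map $T = \nabla \Phi$ pushing $\tilde\gamma$ onto $\nu$ is a contraction. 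Because both measures are invariant under $x \mapsto -x$, uniqueness of the optimal map forces $T$ to be odd; in particular $T(0) = 0$ and $|T(y)| \le |y|$ for every $y$. Consequently a point of the unit ball is mapped into the unit ball, $T(B_1) \subseteq B_1$, i.e. $B_1 \subseteq T^{-1}(B_1)$, and therefore
$$
\nu(B_1) = \tilde\gamma\bigl(T^{-1}(B_1)\bigr) \ge \tilde\gamma(B_1).
$$
Unwinding $\nu(B_1) = \tilde\gamma(\tilde A\cap B_1)/\tilde\gamma(\tilde A)$ gives $\tilde\gamma(\tilde A\cap B_1)\ge \tilde\gamma(\tilde A)\,\tilde\gamma(B_1)$, which is the desired inequality once the linear change of variables is reversed.

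The main obstacle is that Theorem \ref{contr2} is stated for smooth data, whereas the conditioned measure $\nu$ is supported on the convex body $\tilde A$ and its potential $Q+P$ is only extended-real and nonsmooth. I would handle this by approximation: replace $P$ by a family of smooth, finite, uniformly convex potentials $P_\varepsilon$ (for instance a mollified $\varepsilon^{-1}\,\mathrm{dist}(\cdot,\tilde A)^2$, or smooth strongly convex bodies $\tilde A_\varepsilon \downarrow \tilde A$), apply Theorem \ref{contr2} to obtain $1$-Lipschitz maps $T_\varepsilon$, and pass to the limit, using that the contraction bound is uniform in $\varepsilon$ and that the corresponding measures converge weakly to $\nu$. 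The remaining ingredients — linear invariance of (\ref{gcc}), convexity of the indicator potential, and oddness of $T$ from symmetry and uniqueness — are routine and carry no real difficulty.
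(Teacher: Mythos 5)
Your proposal is correct and follows essentially the same route as the paper's proof: reduce by a linear change of variables to the case where $B$ is a ball, view the conditioned measure $\frac{1}{\gamma(A)}\gamma|_A$ as $e^{-Q-P}\,dx$ with $P$ the convex indicator of $A$, invoke Theorem \ref{contr2} to get a contraction $T$ with $T(0)=0$ by symmetry, and conclude $T(B)\subset B$. The only difference is that you spell out the smoothing/approximation needed for the nonsmooth indicator potential, which the paper leaves implicit.
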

\begin{proof}
 Applying a linear transfromation of measures one can reduce the proof to the case when
$B$ is a ball and $\gamma$ is a (non-standard) Gaussian measure. 
Consider the optimal transportation $T$ between $\gamma$ and $\gamma_A = \frac{1}{\gamma(A)} \gamma_{|A}$.
By Theorem \ref{contr2}
$T$ is a contraction and by the symmetry reasons $T(0)=0$. Hence $T(B) \subset B$ and
$$
\frac{\gamma(A \cap B)}{\gamma(A)} = \gamma_A(B) = \gamma(T^{-1}(B)) \ge 
\gamma(B).
$$
This completes the proof.
\end{proof}

The following beautiful  observation \cite{Harge2} follows from the contraction theorem and properties of  the Ornstein-Uhlenbeck semigroup
$$
P_t f(x) = \int f(x  e^{-t} + \sqrt{1-e^{-2t}} y) \ d \gamma(y).
$$

\begin{theorem}
 If $\gamma$ is a standard Gaussian measure, $g$ is symmetric convex and $f $ is symmetric log-concave, then
$$
\int f g \ d \gamma \le \int f  d \gamma \cdot \int g \ d \gamma. 
$$
\end{theorem}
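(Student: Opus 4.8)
The plan is to turn the correlation inequality into a transport estimate and then integrate that estimate along the Ornstein--Uhlenbeck flow. Normalizing so that $\int f\,d\gamma=1$ and writing $\nu=f\,\gamma$, the symmetry and log-concavity of $f$ give $\nu=e^{-W}\,dx$ with $W=\tfrac12|x|^2-\log f+\mathrm{const}$, so $D^2W=\mbox{Id}-D^2\log f\ge\mbox{Id}$. Thus $\nu$ is a symmetric measure of exactly the type covered by Theorem \ref{contr1} with $K=1$, and after dividing the target inequality by $\int f\,d\gamma$ it becomes $\int g\,d\nu\le\int g\,d\gamma$.

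First I would produce the contraction. Let $T=\nabla\Phi$ be the Brenier map pushing $\gamma$ onto $\nu$; by Theorem \ref{contr1} it is $1$-Lipschitz, so $0\le D^2\Phi\le\mbox{Id}$ in the sense of symmetric matrices. Since both $\gamma$ and $\nu$ are even, the map $x\mapsto-T(-x)$ is again an optimal map, so by uniqueness $T$ is odd (equivalently $\Phi$ is even) and $T(0)=0$. As $\nu=T_{\#}\gamma$, the goal reduces to $\int g\circ T\,d\gamma\le\int g\,d\gamma$.

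Next I would interpolate with the semigroup. Set $\phi(t)=\int P_tg\,d\nu$, so that $\phi(0)=\int g\,d\nu$ and $\phi(\infty)=\int g\,d\gamma$; it suffices to show $\phi'\ge0$ and then integrate. The Mehler formula shows that $P_tg$ is again symmetric and convex (an average of affine precompositions of $g$). Writing $L=\Delta-\langle x,\nabla\rangle$ and comparing it with the $\nu$-reversible generator gives $\phi'(t)=\int\langle\nabla P_tg,\nabla w\rangle\,d\nu$ with $w=-\log f$ convex and even. Pulling this back by $T$ and integrating by parts against $\gamma$, one splits it into a manifestly nonnegative trace term $\int\mbox{tr}\big[D^2(P_tg)(T)\,(\mbox{Id}-D^2\Phi)\big]\,d\gamma\ge0$, which is precisely where the contraction bound $0\le D^2\Phi\le\mbox{Id}$ is used, plus a remaining gradient-correlation term.

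The heart of the matter, and the step I expect to be the main obstacle, is the correlation inequality $\int\langle\nabla u,\nabla v\rangle\,d\nu\ge0$ for all symmetric convex $u,v$ (equivalently $\int\langle\nabla h,\nabla f\rangle\,d\gamma\le0$ for symmetric convex $h$). In one dimension this is immediate pointwise, since $u'$ and $v'$ are both odd and nondecreasing, hence co-monotone. In higher dimensions no pointwise sign survives, and this is where the genuine work lies; I would attack it using the evenness of $u,v$ (which annihilates all odd Hermite components) together with the Jensen-type Gaussian bound $\int u(Ax)\,d\gamma\le\int u\,d\gamma$, valid for $\|A\|\le 1$ and convex $u$, applied to the linearization $DT=D^2\Phi$ and to the expansion $D^2\Phi^{*}\ge\mbox{Id}$ of the inverse map. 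Once this correlation is established for each convex $P_tg$, integrating $\phi'\ge0$ over $t\in[0,\infty)$ yields $\int g\,d\nu\le\int g\,d\gamma$ and completes the proof.
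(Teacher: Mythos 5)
Your reduction is set up correctly: normalizing $\int f\,d\gamma=1$, writing $\nu=f\cdot\gamma$, invoking Theorem \ref{contr1} (or \ref{contr2}) to get $0\le D^2\Phi\le \mbox{Id}$ and $T(0)=0$, and computing $\phi'(t)=\int\langle\nabla P_tg,\nabla w\rangle\,d\nu$ with $w=-\log f$ are all sound. But the argument then stops exactly where the difficulty begins. The entire proof has been reduced to the gradient-correlation inequality $\int\langle\nabla u,\nabla v\rangle\,d\nu\ge 0$ for symmetric convex $u,v$ and symmetric log-concave $\nu$ (equivalently $\int\langle\nabla u,\nabla f\rangle\,d\gamma\le 0$), and this is not proved: you yourself flag it as ``the main obstacle'' and offer only a plan (evenness kills odd Hermite modes, plus a Jensen-type bound applied to $D^2\Phi$). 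That plan is not a proof --- the Jensen bound $\int u(Ax)\,d\gamma\le\int u\,d\gamma$ controls values of $u$ along a linear contraction, not the sign of $\int\langle\nabla u,\nabla f\rangle\,d\gamma$, and the intermediate ``split into a trace term plus a gradient-correlation term'' after pulling back by $T$ is never made precise. Beyond dimension one this correlation inequality carries essentially the full weight of the theorem (it is the hard step in Harg\'e's original semigroup approach), so as written the proposal is circularly difficult: it trades the statement for an unproven inequality of comparable depth.

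The paper's proof sidesteps this entirely by running the Ornstein--Uhlenbeck semigroup on the \emph{displacement field} rather than on $g$: writing $T(x)=x+\nabla\varphi(x)$ and setting $\psi(t)=\int g\bigl(x+P_t(\nabla\varphi(x))\bigr)\,d\gamma$, one gets after integration by parts
$$
\psi'(t)=-\int \mbox{Tr}\Bigl[D^2g\bigl(x+P_t(\nabla\varphi)\bigr)\,(I+M)M\Bigr]\,d\gamma,\qquad M=DP_t(\nabla\varphi).
$$
Caffarelli's theorem gives $-I\le D^2\varphi\le 0$, hence $-I\le M\le 0$ and $(I+M)M\le 0$ pointwise, so with $D^2g\ge 0$ the integrand is nonpositive and $\psi$ is increasing; symmetry gives $P_\infty(\nabla\varphi)=0$, and the inequality follows by comparing $\psi(0)$ with $\psi(+\infty)$. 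The key difference is that interpolating the map makes the sign of $\psi'$ a \emph{pointwise} matrix fact, whereas interpolating the test function $g$, as you do, leaves an integrated correlation whose sign is genuinely hard. If you want to salvage your route you must supply a complete proof of $\int\langle\nabla u,\nabla f\rangle\,d\gamma\le 0$; otherwise I would switch to the displacement interpolation above.
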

\begin{proof}
Let $T(x) =x + \nabla \varphi(x)$ be the optimal transportation of $\gamma$ onto $\frac{f \cdot \gamma}{\int f  d\gamma}$.
Thus we need to prove that
$$
\int g(x + \nabla \varphi(x)) \ d \gamma   \le \int g  \ d \gamma.
$$
Set:
$$
\psi(t) = \int g(x + P_t (\nabla \varphi(x)) ) \ d \gamma,
$$
where $P_t = e^{tL}$ is the Ornstein-Uhlenbeck semigroup generated by $L = \Delta - \langle x, \nabla \rangle$.
Note that
$$
\frac{\partial}{\partial t} \psi(t) 
=
\frac{\partial}{\partial t} \int g(x + P_t (\nabla \varphi(x))) \ d \gamma
=
\int \langle  \nabla g(x + P_t (\nabla \varphi(x))),  L P_t (\nabla \varphi(x))  \rangle  \  d \gamma.
$$
Integrating by parts we get
$$
\frac{\partial}{\partial t} \psi(t) =
- \int \mbox{Tr} \Bigl[ D^2 g (x + P_t (\nabla \varphi(x))) \cdot (I + M) M\Bigr]  \ d \gamma,
$$
where
$$
M = D P_t (\nabla \varphi(x)) = e^{-t/2} P_t ( D^2 \varphi).
$$
Clearly, by the contraction theorem $I + M \ge 0$ and $M \le 0$. Hence $\mbox{Tr} \Bigl[ D^2 g \cdot (I + M) M \Bigr] \le 0$ and $\psi(t)$ is increasing.
Note that $P_{+\infty}(\nabla \varphi) = \int  \nabla \varphi \ d\gamma =  \frac{\int x f \ d \gamma}{\int f \ d \gamma} = 0$.
 Hence
$ \int g(x + \nabla \varphi(x)) \ d \gamma \le \psi(+ \infty) = \int g \ d \gamma $. The proof is complete.
\end{proof}

Some other  applications to correlation inequalities have been obtained in 
\cite{CorEr}, \cite{KimMilman}.
A generalization of Theorem \ref{ellipsoid} to non-Gaussian measures have been 
obtained in \cite{KimMilman}
(see Corollary 4.1).

Other applications obtained in \cite{Caf}, \cite{CorEr}, \cite{Harge}, \cite{KimMilman}
concern inequalities
of the type
$$
\int \Gamma(x) \ d\mu \le \int \Gamma(x) \ d \nu,
$$
where $\Gamma(x)$ is  convex (moment inequalities etc.).

The following theorem was obtained in \cite{CEFM} with the help of the contraction theorem. In particular, it solves the so-called (B)-conjecture 
from the theory of Gaussian measures.

\begin{theorem}
Let $K$ be a symmetric convex set and $\gamma$ is a standard Gaussian measure. Then the function
$$
t \to \gamma(e^t K)
$$
is log-concave.

In particular, $\gamma(\sqrt{ab} K)^2 \ge \gamma(aK) \gamma(bK)$ for every $a>0, b>0$.
\end{theorem}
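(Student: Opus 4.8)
The plan is to prove that $u(t):=\gamma(e^t K)$ satisfies $(\log u)''\le 0$ and then read off the stated midpoint inequality. Since $e^{t_0}K$ is again symmetric and convex, replacing $K$ by $e^{t_0}K$ merely shifts $t$, so it suffices to verify concavity at $t=0$. After the substitution $x=e^t y$ one has
$$u(t)=\frac{e^{dt}}{(2\pi)^{d/2}}\int_K e^{-e^{2t}|y|^2/2}\,dy,$$
and differentiating $\log u$ twice at $t=0$ gives, with $\gamma_K=\frac{1}{\gamma(K)}\gamma|_K$ the Gaussian conditioned to $K$,
$$(\log u)''(0)=\int|y|^4\,d\gamma_K-\Bigl(\int|y|^2\,d\gamma_K\Bigr)^2-2\int|y|^2\,d\gamma_K.$$
Thus everything reduces to the sharp variance bound
$$\mathrm{Var}_{\gamma_K}\bigl(|y|^2\bigr)\le 2\int|y|^2\,d\gamma_K.$$

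The naive route --- Brascamp--Lieb (or Caffarelli's contraction combined with the Gaussian Poincar\'e inequality) applied to $g(y)=|y|^2$ --- only yields the constant $4$, because $|y|^2$ lives in the second chaos and first-order Poincar\'e is not tight on it. To recover the sharp constant $2$ I would exploit the symmetry of $\gamma_K$. By Theorem \ref{contr2}, applied exactly as in the proof of Theorem \ref{ellipsoid} with $\nu=\gamma_K=e^{-Q-P}$ ($Q$ the Gaussian potential, $P$ the convex potential equal to $+\infty$ on $K^c$), the optimal map $T=\nabla\Phi$ pushing $\gamma$ onto $\gamma_K$ is a contraction, so $0\le D^2\Phi\le\mathrm{Id}$. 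Because both $\gamma$ and $\gamma_K$ are invariant under $x\mapsto -x$, uniqueness of the optimal map forces $T$ to be odd: $T(-x)=-T(x)$.

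The key point is an improved Poincar\'e inequality for even functions. For the standard Gaussian the Ornstein--Uhlenbeck spectrum on even functions starts at the second Hermite level, so the spectral gap on even functions equals $2$; hence $\mathrm{Var}_\gamma(g)\le\frac12\int|\nabla g|^2\,d\gamma$ for every even $g$. If $g$ is even then $g\circ T$ is even (as $T$ is odd), and $\nabla(g\circ T)=D^2\Phi\cdot\bigl((\nabla g)\circ T\bigr)$ with $\|D^2\Phi\|\le 1$, so $|\nabla(g\circ T)|\le|(\nabla g)\circ T|$. Transferring the even Gaussian inequality through $T_{\#}\gamma=\gamma_K$ yields
$$\mathrm{Var}_{\gamma_K}(g)=\mathrm{Var}_\gamma(g\circ T)\le\frac12\int|\nabla(g\circ T)|^2\,d\gamma\le\frac12\int|\nabla g|^2\,d\gamma_K$$
for every even $g$. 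Taking $g(y)=|y|^2$, for which $|\nabla g|^2=4|y|^2$, gives precisely $\mathrm{Var}_{\gamma_K}(|y|^2)\le 2\int|y|^2\,d\gamma_K$, the required bound.

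The hard part is the justification rather than the algebra: one must check that Caffarelli's theorem genuinely applies to the singular potential of $\gamma_K$ (which I would handle by approximating $K$ with smooth uniformly convex bodies, as in the incremental-quotient proof), that the optimal $T$ is truly odd, and that the unbounded test function $g=|y|^2$ is admissible in the transferred spectral inequality (handled by truncation together with the Gaussian-type tail of $\gamma_K$). Granting the concavity of $\log u$, the final assertion is immediate: with $a=e^s$ and $b=e^r$, midpoint concavity gives $u\bigl(\tfrac{s+r}{2}\bigr)^2\ge u(s)u(r)$, that is, $\gamma(\sqrt{ab}\,K)^2\ge\gamma(aK)\gamma(bK)$.
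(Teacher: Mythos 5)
Your proposal is correct and follows essentially the same route as the paper: reduce log-concavity at $t=0$ to the variance bound $\mathrm{Var}_{\gamma_K}(|x|^2)\le 2\int |x|^2\,d\gamma_K$, and deduce this from an improved (constant $\tfrac12$) Poincar\'e-type inequality transferred from the Gaussian second-chaos spectral gap through Caffarelli's contraction. The only substantive difference is that you state the improved inequality for \emph{even} test functions and use oddness of the optimal map $T$ to transfer it, whereas the paper states it under the hypotheses $\int f\,d\mu=\int \nabla f\,d\mu=0$; your variant is the one for which the transfer through $T$ is automatic (the gradient-mean condition does not obviously survive composition with $T$), so if anything your formulation tightens the paper's sketch at that step.
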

{\bf Sketch of the proof.} 
Since $\gamma(e^{t_1 + t_2} K) = \gamma(e^{t_1} (e^{t_2} K))$, it is sufficient to show that $g(t) = \gamma(e^t K)$
is log-concave at zero. This is equivalent to the inequality $g''(0) g(0) \le (g'(0))^2$. Computing the derivatives of $g$
we get that this is equivalent to 
$$
\int |x|^4 \ d \gamma_K - \bigl( \int |x|^2 \ d \gamma_K\bigr)^2 \le 2 \int x^2 \ d \gamma_K,
$$  
where $\gamma_K = \frac{1}{\gamma(K)} I_{K} \cdot \gamma$.
Let us prove a more general relation: if $\mu = e^{-W} \ dx$ is a log-concave measure with $D^2 W \ge \mbox{Id}$ and $f$
is a function, satisfying $\int f \ d\mu = 0, \int \nabla f \ d \mu=0$, then the following Poincar{\'e}-type inequality holds:
\begin{equation}
\label{strongPoin}
\int f^2 \ d \mu \le \frac{1}{2} \int \| \nabla f \|^2 \ d \mu.
\end{equation}
Applying (\ref{strongPoin}) to $f = |x|^2 - \int |x|^2 \ d \mu$, we get the desired inequality for $\mu$. Then it remains
to approximate $\gamma_K$ by measures of this type.

Note that by the Caffarelli's theorem it is sufficient to prove inequality (\ref{strongPoin}) only for the standard Gaussian measure.
But in this case  (\ref{strongPoin}) is well-known and can be obtained from the expansion of $f$ on the basis formed by the Hermite polynomials. 
The proof is complete.

\

Note that apart from the observations of the previous section nothing is known about contractions of manifolds.

The following result was obtained by S. I. Valdimarsson (see \cite{Vald}).
For every nonnegative symmetric $M$ let us denote by $\gamma_M$ the
Gaussian measure with density
$$
\sqrt{\det M} e^{-\pi \langle M x, x \rangle}.
$$

\begin{theorem}
Let $A$, $G$ and $B$ are positive definite symmetric linear transformations, $A<G$, $GB=BG$, $H$ is  a convex function,
and $\mu_0$ is a probability measure.
The optimal transportation $T = \nabla \Phi$  between probability measures
$$
\mu = \gamma_{B^{-1/2} G B^{-1/2}} \ast \mu_0 \ \mbox{and} \ \nu = C e^{-H}  \cdot\gamma_{B^{-1/2} A^{-1}  B^{-1/2}} 
$$
satisfies
$$
D^2 \Phi \le G.
$$
\end{theorem}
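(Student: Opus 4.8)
The plan is to read this as a matrix-valued version of the maximum-principle proof of Theorem \ref{contr1}: the two structural features of $\mu$ and $\nu$ should translate into a two-sided control of the Hessians of their potentials, after which a purely algebraic argument forces $D^2\Phi\le G$. Writing $\mu=e^{-V}\,dx$ and $\nu=e^{-W}\,dx$, so that the change of variables formula reads $V=W(\nabla\Phi)-\log\det D^2\Phi$, I would first extract the two estimates
$$
D^2 V \le \Lambda := 2\pi\, B^{-1/2} G B^{-1/2}, \qquad D^2 W \ge \Theta := 2\pi\, B^{-1/2} A^{-1} B^{-1/2},
$$
and then prove that any such pair, together with $A<G$ and $GB=BG$, yields the claim.

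The lower bound on $D^2 W$ is immediate: since $\nu = C e^{-H}\gamma_{B^{-1/2}A^{-1}B^{-1/2}}$ we have $W = H + \pi\langle B^{-1/2}A^{-1}B^{-1/2}x,x\rangle + \mathrm{const}$, and $D^2 H\ge 0$ because $H$ is convex. For the upper bound on $D^2 V$ the key is a standard fact about Gaussian convolutions: if $p=g_\Sigma * \mu_0$ with $g_\Sigma$ a centred Gaussian density of covariance $\Sigma$, then $-\nabla^2\log p\le \Sigma^{-1}$. Differentiating under the integral sign one finds $\nabla\log p(x)=-\Sigma^{-1}(x-m(x))$, where $m(x)$ is the posterior mean of $y$ given $x$, whence $-\nabla^2\log p=\Sigma^{-1}-\Sigma^{-1}\mathrm{Cov}(y\mid x)\Sigma^{-1}\le\Sigma^{-1}$, the posterior covariance being nonnegative. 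Taking $\Sigma^{-1}=2\pi B^{-1/2}GB^{-1/2}$ gives $D^2 V\le\Lambda$.

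With the bounds in hand I would run the maximum principle on the matrix quantity $f(x,e)=\langle(D^2\Phi(x)-G)e,e\rangle$. Assume (after the customary smooth approximation and a decay-at-infinity lemma, exactly as in the proof of Theorem \ref{contr1}) that $\sup f=\lambda$ is attained at $(x_0,e_0)$ with $|e_0|=1$, and suppose for contradiction $\lambda>0$. Maximality in $e$ makes $e_0$ the top eigenvector of $D^2\Phi(x_0)-G$, so $D^2\Phi(x_0)e_0=(G+\lambda I)e_0$; maximality in $x$ gives $\nabla\Phi_{e_0e_0}(x_0)=0$ and $D^2\Phi_{e_0e_0}(x_0)\le 0$. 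Substituting these into the twice-differentiated change of variables formula of Section 2, using $D^2 W\ge\Theta$ and discarding the two manifestly nonnegative trace terms, yields
$$
\langle \Lambda e_0, e_0\rangle \ge V_{e_0e_0}(x_0) \ge \big\langle \Theta(G+\lambda I)e_0,\ (G+\lambda I)e_0\big\rangle .
$$

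Finally I would close the argument by linear algebra, and this is where $GB=BG$ and $A<G$ enter. Setting $f_0=B^{-1/2}e_0$ and using that $G$, and hence $B^{-1/2}(G+\lambda I)=(G+\lambda I)B^{-1/2}$, commutes with $B$, the displayed inequality becomes
$$
\langle G f_0, f_0\rangle \ge \big\langle (G+\lambda I)A^{-1}(G+\lambda I) f_0,\ f_0\big\rangle .
$$
But $A<G$ gives $A^{-1}>G^{-1}$, so for $\lambda>0$
$$
(G+\lambda I)A^{-1}(G+\lambda I) > (G+\lambda I)G^{-1}(G+\lambda I) = G + 2\lambda I + \lambda^2 G^{-1} > G,
$$
contradicting the previous line. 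Hence $\lambda\le 0$, that is $D^2\Phi\le G$. I expect the real work to lie not in this computation but in the regularity and approximation underpinning the maximum principle: one must reduce (by approximating $\mu_0$ by compactly supported smooth measures and truncating) to a setting where Caffarelli's regularity theory gives $\Phi\in C^{2,\alpha}_{loc}$ and where $f(x,e)$ genuinely attains its supremum, precisely the technical heart of the two variants of the proof of Theorem \ref{contr1}.
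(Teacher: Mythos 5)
The paper itself gives no proof of this theorem: it only states Valdimarsson's result and refers the reader to \cite{Vald}, so there is no in-text argument to compare yours against. Judged on its own, your proof is correct and is the natural extension of the maximum-principle proof of Theorem \ref{contr1} in Section 2. The two key inputs are identified and verified accurately: the bound $-D^2\log(g_\Sigma*\mu_0)\le\Sigma^{-1}$ via the posterior-covariance identity $-\nabla^2\log p=\Sigma^{-1}-\Sigma^{-1}\mathrm{Cov}(y\mid x)\Sigma^{-1}$ (with the correct translation $\Sigma^{-1}=2\pi M$ for the normalization $\gamma_M=\sqrt{\det M}\,e^{-\pi\langle Mx,x\rangle}\,dx$ used in the paper), and the lower bound $D^2W\ge 2\pi B^{-1/2}A^{-1}B^{-1/2}$ from convexity of $H$. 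Your maximum-principle step is also sound: maximizing $\langle(D^2\Phi(x)-G)e,e\rangle$ jointly in $(x,e)$ gives $D^2\Phi(x_0)e_0=(G+\lambda I)e_0$ without needing $e_0$ to be an eigenvector of $D^2\Phi(x_0)$ itself, the gradient term and the two trace terms are disposed of exactly as in Section 2, and the closing algebra — commuting $G+\lambda I$ past $B^{-1/2}$ (legitimate since $GB=BG$ for symmetric positive matrices implies $G$ commutes with $B^{-1/2}$), then using $A^{-1}>G^{-1}$ and $(G+\lambda I)G^{-1}(G+\lambda I)=G+2\lambda I+\lambda^2G^{-1}>G$ for $\lambda>0$ — is correct. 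The only caveat, which you flag yourself, is the attainment of the supremum and the regularity of $\Phi$; at the level of rigor of the sketches in this survey (which treat the same issue for Theorem \ref{contr1} by approximation and a decay lemma) this is an acceptable gap to leave to the approximation argument.
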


A particular form of the measure $\mu$ allows Valdimarsson (after F. Barthe \cite{Barthe}) to obtain by the transportation arguments a 
new form of the well-known  Brascamp-Lieb  inequality. See \cite{Vald} for details.

We finish with the following observation from \cite{BarKol}.

\begin{proposition}
Let $\mu = I_{[0,+\infty)} e^{-x} \ dx $ be the one-sided exponential measure and $\nu = e^{g} \cdot \mu$ with $|g'| \le c$ for some $c<1$. Then the monotone map $T$ which transports $\nu$ to $\mu$ satisfies
$$T'(x) \in [1-c, 1+c]$$
for all $x \in [0, \infty)$. The inverse map $S = T^{-1}$ is a $\frac{1}{1-c}$-contraction.
\end{proposition}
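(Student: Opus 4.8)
The plan is to reduce the statement to a scalar Riccati differential inequality for $T'$ and then run a maximum-principle argument on the half-line, the two crucial inputs being the boundary facts $T(0)=0$ and $\lim_{x\to\infty}T(x)=+\infty$ that hold because $T$ is an increasing bijection of $[0,\infty)$ onto itself. First I would write the one-dimensional change of variables for the monotone map. Since $\mu$ and $\nu$ are supported on $[0,\infty)$ with strictly positive continuous densities, $T$ fixes the left endpoint, $T(0)=0$, and equality of cumulative distributions gives $e^{-T(x)}\,T'(x)=C\,e^{g(x)-x}$ for a normalizing constant $C$. Taking logarithms and differentiating, the constant disappears and one is left with $T''/T' = T'-1+g'$. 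Writing $u=T'>0$ and $\phi:=1-g'$, so that $\phi\in[1-c,1+c]$ because $|g'|\le c$, this becomes
$$
u' = u\,(u-\phi).
$$
The claim $u\in[1-c,1+c]$ is now a statement about this Riccati equation; I would assume $g$ smooth (so that $T\in C^2$ and $u>0$ everywhere) and recover the general case at the end by approximation.

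Both bounds follow from the sign of $u-\phi$, together with the two global constraints. For the upper bound, suppose $u(x_1)>1+c\ge\phi(x_1)$ at some finite $x_1$; then $u'(x_1)>0$, and as long as $u>\phi$ the solution keeps increasing, so the strict inequality $u>1+c\ge\phi$ persists for all $x>x_1$. On this range $u'\ge u\,(u-(1+c))$, and integrating this Riccati inequality shows that $u\to+\infty$ at a \emph{finite} value of $x$, contradicting the finiteness of $T'$ at every point of $[0,\infty)$. Hence $u\le 1+c$ everywhere (this covers $x_1=0$ as well). For the lower bound, suppose $u(x_1)<1-c\le\phi(x_1)$; then $u'(x_1)<0$, and by the same reasoning $u$ stays below $1-c$ and decreasing for all $x>x_1$. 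Consequently $u-\phi\le u(x_1)-(1-c)=:-\beta<0$, so $u'\le-\beta u$ and $u(x)\le u(x_1)e^{-\beta(x-x_1)}$ decays exponentially. Then $\int_0^\infty u<\infty$, i.e. $\lim_{x\to\infty}T(x)<\infty$, contradicting surjectivity of $T$ onto $[0,\infty)$. Therefore $u\ge 1-c$ everywhere, and together with the upper bound we obtain $T'\in[1-c,1+c]$.

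With $T'\in[1-c,1+c]$ in hand, the final assertion is routine: $S=T^{-1}$ satisfies $S'(y)=1/T'(S(y))\le 1/(1-c)$, so $S$ is $\frac{1}{1-c}$-Lipschitz, as claimed. The genuinely delicate point is the asymptotic analysis as $x\to\infty$. The equilibrium $u=\phi$ of the Riccati flow is \emph{repelling}, so the transport solution is the exceptional trajectory threading between finite-time blow-up (on the high side) and exponential decay to $0$ (on the low side); it is precisely the two global constraints $T(x)<\infty$ and $T(x)\to\infty$, rather than any purely local maximum principle, that select this trajectory and confine $u$ to $[1-c,1+c]$. I expect the verification of the finite-time blow-up and of the resulting integrability contradiction to be the main technical hurdle, while the smoothness needed to run the argument can be arranged by a standard approximation of $g$.
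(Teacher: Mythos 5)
Your proof is correct, but it takes a genuinely different route from the paper's. The paper notes that the result ``follows from the explicit representation of $T$'': equating cumulative distribution functions gives $e^{-T(x)}=\int_x^\infty e^{g(t)-t}\,dt$ (up to normalization), and the two-sided bound $e^{g(x)-x}/(1+c)\le\int_x^\infty e^{g(t)-t}\,dt\le e^{g(x)-x}/(1-c)$, which follows from $|g'|\le c$, immediately yields $T'(x)=e^{g(x)-x}/\int_x^\infty e^{g(t)-t}\,dt\in[1-c,1+c]$. The maximum-principle computation the paper displays (at an assumed maximum point of $S'$ one has $S''=0$, and the differentiated change-of-variables relation gives $S'=1/(1-g'(S))\le 1/(1-c)$) is explicitly labelled heuristic, since it presupposes that the supremum of $S'$ is attained. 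You instead derive the Riccati equation $u'=u(u-1+g')$ for $u=T'$ and run a shooting/comparison argument: a value of $u$ above $1+c$ forces finite-time blow-up of $u$, a value below $1-c$ forces exponential decay of $u$ and hence boundedness of $T$, and both scenarios are excluded by the global facts that $T'$ is finite on $[0,\infty)$ and $T(x)\to\infty$. This is a rigorous substitute for the paper's formal maximum principle --- you trade the ``maximum is attained'' assumption for global boundary behaviour, and you get both the upper and the lower bound on $T'$ from one mechanism --- at the cost of being longer than the one-line integral estimate and of needing a smoothing step when $g$ is merely Lipschitz (which you acknowledge). One small remark: the boundary condition $T(0)=0$ that you flag as crucial is not actually used anywhere in your argument; only positivity and finiteness of $T'$ and the divergence $T(x)\to\infty$ enter.
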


The result follows from the explicit representation of $T$ but can heuristically proved by the maximum principle arguments applied to $S$.
Indeed
$$
g(S) -S + \log S' = - x.
$$
If $x_0$ is the maximum point for $S'$, one has $S''(x_0) = 0$. In addition,
$$
g'(S(x_0)) S'(x_0)-S'(x_0) + \frac{S''(x_0)}{S'(x_0)} = -1.
$$
Clearly $S'(x_0) = \frac{1}{1-g'(S(x_0)} \le \frac{1}{1-c}$.

Using this property one can give a transportation proof of the 
$1$-dimensional Talagrand inequality for the  exponential law (see \cite{BarKol}, Proposition 6.6).

\end{document}